\newtheorem{thm}[equation]{Theorem}
\newtheorem{lem}[equation]{Lemma}
\newtheorem{thmA}{Theorem}
\newtheorem{prop}[equation]{Proposition}
\newtheorem{cor}[equation]{Corollary}
\newtheorem*{thm*}{Theorem}
\theoremstyle{definition}
\newtheorem*{rmks}{Remarks}
\newtheorem*{question}{Question}
\numberwithin{equation}{section}
\DeclareMathOperator{\Lk}{Lk}
\DeclareMathOperator{\supp}{supp}
\DeclareMathOperator{\CAT}{CAT}
\DeclareMathOperator{\id}{id}
\DeclareMathOperator{\Mass}{Mass}
\DeclareMathOperator{\FV}{FV}
\DeclareMathOperator{\FVol}{FVol}
\DeclareMathOperator{\Dist}{Dist}
\DeclareMathOperator{\lift}{lift}
\newcommand{\F}{\mathcal{F}}
\begin{document}
\title{Homological filling functions with coefficients}
\author{Xingzhe Li}
\address{Department of Mathematics, University of California, Santa Barbara, California, USA}
\email{xingzheli@ucsb.edu}
\author{Fedor Manin}
\email{manin@math.ucsb.edu}

\begin{abstract}
    How hard is it to fill a loop in a Cayley graph with an unoriented surface?  Following a comment of Gromov in ``Asymptotic invariants of infinite groups'', we define homological filling functions of groups with coefficients in a group $R$. Our main theorem is that the coefficients make a difference.  That is, for every $n \geq 1$ and every pair of coefficient groups $A, B \in \{\mathbb{Z},\mathbb{Q}\} \cup \{\mathbb{Z}/p\mathbb{Z} : p\text{ prime}\}$, there is a group whose filling functions for $n$-cycles with coefficients in $A$ and $B$ have different asymptotic behavior.
\end{abstract}
\maketitle
\section{Introduction}
Geometric group theorists have studied a wide variety of isoperimetric phenomena.  The best-known of these is the Dehn function, which admits a combinatorial algebraic interpretation: it measures the complexity of the word problem, that is, the number of relators needed to trivialize a word of a certain length.  However, once we move into the realm of pure geometry, the ordinary Dehn function, which measures the difficulty of filling a loop in the Cayley complex with a disk, is no more natural than the homological Dehn function, which measures the difficulty of filling a loop with an oriented surface of any genus.  In fact, in some ways the latter is easier to work with.

Both these ideas admit higher-dimensional generalizations.  Given a group $G$ of type $\mathcal{F}^{n+1}$, let $X$ be a finite $n$-connected $(n+1)$-complex on which $G$ acts geometrically.  Then the difficulty of filling an $n$-sphere in $X$ with an $(n+1)$-disk \cite{AWP} or an $n$-cycle with an $(n+1)$-chain \cite[Ch.\ 10]{EPC} are functions which (up to an asymptotic notion of equivalence which we denote $\sim$) depend only on $G$.  This defines filling functions typically denoted $\delta^n_G$ and $\FV^{n+1}_G$, respectively.\footnote{We are stuck with the difference in indices for historical reasons.}

A natural question is whether these homotopical and homological filling functions are always equivalent to each other.  This is known: they are inequivalent for $n=1$ \cite{ABDY} and $n=2$ \cite{Young} but equivalent for $n \geq 3$ (as is shown in \cite{ABDY} by combining results from \cite{BBFS,GroFRM,White}).

Oddly, no one seems to have asked about filling loops with unoriented surfaces.  But once one is working with chains, it is natural (as already remarked by Gromov in \cite[p.~81]{GroAI}) to try to vary the coefficients.  In \cite{Ger}, Gersten defined a filling function $\FV_{\mathbb{R}}$ which measures the difficulty of filling an integral 1-cycle with a \emph{real} (or, equivalently, rational) 2-chain, and asked whether this is asymptotically equivalent to the integral version.  This rational filling function was also studied by Mart\'\i nez-Pedroza \cite{M-P}; we know of no other similar work.

In this paper, given an abelian group $R$, we will denote by $\FV^{n+1}_{G;R}$ the difficulty of filling an $n$-cycle (with coefficients in $\mathbb{Z}$ or $R$, depending on $R$) with an $(n+1)$-chain with coefficients in $R$.  Our main result is that, unlike $\delta^n_G$ and $\FV^{n+1}_G$, these differ in all dimensions:
\begin{thmA} \label{thm:main}
    Let $q$ be a prime, $n \geq 1$, and $d \in \mathbb{N} \cup \{\infty\}$.  Then there is a group $H$ of type $\mathcal{F}^{n+1}$ (a normal subgroup of an $(n+2)$-dimensional CAT(0) group) such that
    \begin{align*}
        \FV^{n+1}_{H;R}(x) &\preceq x^{(n + 1)[\ln(n + 1) + 2]} & R
        &= \mathbb{Q}\text{ or }\mathbb{Z}/p\mathbb{Z}, \gcd(p,q)=1 \\
        \FV^{n+1}_{H;R}(x) &\succeq f_{d,n}(x)
        & R &= \mathbb{Z}\text{ or }\mathbb{Z}/q\mathbb{Z},
    \end{align*}
    where $f_{d,n}(x)=\exp(\!\sqrt[n]{x})$ if $d=\infty$ and $x^{d/n}$ otherwise.
\end{thmA}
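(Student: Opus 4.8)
The plan is to realize $H$ as the kernel of a homomorphism $\phi\colon W\twoheadrightarrow\mathbb Z$, where $W$ acts geometrically on a CAT(0) cube complex $Y$ of dimension $n+2$ and $\phi$ comes from a Bestvina--Brady Morse function $\widetilde\phi\colon Y\to\mathbb R$. If all ascending and descending links in $Y/W$ are $n$-connected, then a thickened level set $X=\widetilde\phi^{-1}([-\tfrac12,\tfrac12])$ is $n$-connected and carries a geometric $H$-action; consequently $H$ has type $\mathcal F^{n+1}$ and, for every coefficient group $R$, the invariant $\FV^{n+1}_{H;R}$ is computed by filling $n$-cycles of $X$ by $(n+1)$-chains of $X$. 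Everything is then built around a single ``$q$-adic gadget'': a family of subcomplexes of $X$ carrying $n$-cycles $z_x$ of mass $\asymp x$ whose fillings in $Y$ must traverse a staircase of degree-$q$ mapping cylinders, the key feature being that one traversal multiplies the $\mathbb Z$-mass of a transverse cycle by $q$ while it can be kept bounded as soon as $q$ is invertible in $R$.

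\textbf{The cube complex.}
First I would construct $Y$ and $\widetilde\phi$. Because CAT(0) groups contain no exponentially distorted abelian subgroup, and in particular no $BS(1,q)$, the $q$-scaling has to be produced homologically rather than as a subgroup: a region of $X$ is arranged to look like a mapping telescope of degree-$q$ self-maps of an $n$-sphere, interwoven with the cubes of $Y$ so that the links stay $n$-connected. The staircase is given length $\sim\log_q x$ at scale $x$ when $d=\infty$; for finite $d$, a unipotent modification of the gluing maps replaces the pure multiplication by $q$ with a ``$\times q$ with shear,'' turning the exponential staircase into a polynomially long one along which transverse mass grows like $j^{\,d-1}$ at step $j$. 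Exhibiting one $Y$ and one $\widetilde\phi$ that realizes this while keeping $\dim Y=n+2$, the CAT(0) cube structure, and $n$-connected links is the first substantial task.

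\textbf{Upper bound for $R=\mathbb Q$ or $\mathbb Z/p\mathbb Z$, $p\nmid q$.}
Given an $n$-cycle $z$ of $X$ with $\mathrm{mass}(z)\le x$, first fill it in the contractible CAT(0) cube complex $Y$: one obtains $C$ with $\partial C=z$ and $\mathrm{mass}(C)\preceq x^{(n+1)/n}$, and a Federer--Fleming-type deformation lets us assume $C$ is concentrated near level $0$ (subdividing each $(n+2)$-cube into $(n+2)!$ simplices is what will contribute the $\ln(n+1)$-type terms to the exponent). Then push $C$ into $X$ via the Morse flow and the chain-contractions of the links; over these coefficients those chain-contractions and the $\times q$ maps are boundedly invertible, so the flow increases mass by at most a polynomial factor, and an optimization of the bookkeeping yields a filling of $z$ in $X$ of mass $\preceq x^{(n+1)[\ln(n+1)+2]}$. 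Only the precise constant in the exponent, not its order of magnitude, should be delicate here.

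\textbf{Lower bound for $R=\mathbb Z$ or $\mathbb Z/q\mathbb Z$; the main obstacle.}
It remains to show $z_x$ has no cheap filling over these coefficients. I would do this by producing a bounded ``$q$-adic weight'' cochain $\omega_x$ on the $(n+1)$-cells of $Y$ with $|\langle\omega_x,C\rangle|\ge f_{d,n}(x)$ for every $(n+1)$-chain with $\partial C=z_x$: $\omega_x$ records, with multiplicity weighted by $q^{\,j}$ (respectively by $j^{\,d-1}$ in the finite-$d$ case), how often $C$ crosses step $j$ of the staircase, and $\partial C=z_x$ forces $C$ to cross every step because over $\mathbb Z$---and over $\mathbb Z/q\mathbb Z$ after reduction---the crossing cannot be spread out across denominators the way it can over $\mathbb Z[1/q]$. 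I expect this to be the crux of the argument. All the standard lower-bound techniques for homological fillings---Gersten's \cite{Ger}, Young's \cite{Young}, the pushing arguments of \cite{ABDY}---yield obstructions that are already visible over $\mathbb Q$, whereas here one needs a genuinely coefficient-sensitive isoperimetric inequality that is at the same time compatible with the rational cheapness of the previous step: the same staircase must be rigid over $\mathbb Z$ and $\mathbb Z/q\mathbb Z$ yet floppy over $\mathbb Z[1/q]$ and $\mathbb Z/p\mathbb Z$. Assembling a single $W$, $\widetilde\phi$, and weight system $\omega_x$ for which all of the above goes through---uniformly in $n$, $q$ and $d$---is the real content of Theorem~\ref{thm:main}.
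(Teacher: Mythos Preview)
Your proposal is more a research plan than a proof, and its mechanism differs substantially from the paper's.  The paper does \emph{not} build a single ``$q$-adic staircase'' in which multiplication by $q$ simultaneously produces the large fillings and the coefficient sensitivity.  It separates the two phenomena and glues them together.

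For $n=1$, the large fillings come from the Bieri--Stallings double $D=B*_{F_m}B$ of a free-by-cyclic group $B$ imported from \cite{ABDY}, in which the free kernel $F_m$ has distortion $\sim f_d$; this has nothing to do with $q$.  The coefficient sensitivity comes from a separate piece: a RAAG $A$ whose defining flag complex $Y$ is a triangulated Moore space with $H_1(Y)\cong\mathbb Z/q\mathbb Z$.  The group $G$ is the fundamental group of a graph of groups with one vertex $Q=B\times F_2$ and several vertices carrying copies of $A$, glued along $F_2\times F_2$ edge groups, and $H=\ker(G\to\mathbb Z)$.  The upper bound for $R=\mathbb Q$ or $\mathbb Z/p\mathbb Z$ with $p\nmid q$ works because the hard loops in $L_Q$ can be rerouted through $L_A$, where they bound since $H_1(Y;R)=0$.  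The lower bound for $R=\mathbb Z$ or $\mathbb Z/q\mathbb Z$ works because $H_1(Y;R)\neq 0$ implies that any portion of a filling lying in a copy of $L_A$ has zero boundary on $\partial L_A$ (Lemma~\ref{Subtract}); one is therefore forced to fill inside a single copy of $L_Q$, where the distortion of $F_m$ gives the bound via Lemma~\ref{lem:BH}.  There is no weighted cochain $\omega_x$ and no $q$-adic telescope anywhere in the argument.

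For $n\ge 2$, the paper does not redesign the link combinatorics dimension by dimension.  It sets $G_n=G_{n-1}\times F_2$ and $H_n=\ker(G_n\to\mathbb Z)\cong G_{n-1}*_{H_{n-1}}G_{n-1}$, and proves a general transfer result (Theorem~\ref{B}) bounding $\FV^{n+1}_{G*_HG;R}$ in terms of $\FV^n_{H;R}$ and $\FV^{n+1}_{G;R}$.  Iterating this with the CAT(0) isoperimetric inequality for $G_{n-1}$ yields both bounds; the exponent $(n+1)[\ln(n+1)+2]$ comes from the recurrence $u_n(x)=x\,[u_{n-1}(x)]^{(n+1)/n}$, not from subdividing $(n+2)$-cubes into simplices.

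The concrete gaps in your plan: you have not exhibited any CAT(0) cube complex, and it is far from clear how a mapping telescope of degree-$q$ self-maps of $S^n$ could be made into one whose ascending and descending links are $n$-connected; your cochain $\omega_x$ is undefined and you yourself flag its construction as ``the crux''; and your worry about exponentially distorted abelian subgroups is a red herring---the paper obtains $f_d$ from exponentially (or polynomially) distorted \emph{free} subgroups of CAT(0) groups, which exist in abundance and are entirely decoupled from the prime $q$.
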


\subsection{Why filling functions with coefficients?}
Of the filling functions introduced in this paper, the easiest to understand and perhaps the most useful are $\FV^n_{G;\mathbb{Q}}$ and $\FV^n_{G;\mathbb{Z}/2\mathbb{Z}}$.  A rational filling of a cycle $Z$ is one in which simplices are allowed to appear with fractional coefficients.  One can also think of the rational filling volume as measuring the difficulty of filling multiples of a cycle:
\[\FVol_{\mathbb{Q}}(Z)=\liminf_{r \to \infty} \frac{\FVol(rZ)}{r}.\]
Rational chains and cycles can be useful mainly because, as elements of a vector space, they simplify algebraic arguments.

On the other hand, a mod $2$ filling of a cycle can be thought of as a filling by an unoriented hypersurface.  In fact, mod $2$ homology is easier to define than integral homology as one does not have to worry about orientation.  As such, and because $k$-chains can be thought of as subsets of the set of $k$-cells, it is widely used in combinatorics and applied topology.  Mod $2$ filling functions, specifically, have come up in the study of high-dimensional expanders.  While there are many nonequivalent candidate higher-dimensional generalizations of the notion of expander graphs \cite{Lub}, several of them, e.g.~\cite{LiMe,Gro,DK}, use a coisoperimetric constant which measures the difficulty of filling cocycles.  Given the close relationship between isoperimetry and coisoperimetry induced by linear programming duality, 
and the fact that many explicit constructions of expander families, from \cite{Marg} and \cite{LPS} to \cite{EvKa}, have used geometric group theory, we are optimistic that the ideas discussed here are relevant in that domain.

Mod $2$ isoperimetry in the Euclidean setting was notably studied by Robert Young \cite{YoungMod2}, who showed that the mod $2$ filling volume of a Lipschitz cycle is bounded by a constant times its integral filling volume.  In other words, it is impossible to build a cycle in Euclidean space (even one that is not at all isoperimetric) which is much easier to fill mod $2$ (or, more generally, mod $p$) than integrally.  Among other implications, this considerably simplifies the geometric measure theory of chains with mod 2 coefficients.
\begin{question}
Can an infinitesimal version of our construction yield exotic metric spaces which are locally highly connected but in which Young's results do not hold?
\end{question}

\subsection{Proof methods}

In the case $n=1$, the proof closely follows the methods of \cite{ABDY}.  Essentially, the group $H$ is constructed by amalgamating a group with large Dehn function with a Bestvina--Brady group which has many cycles that can be filled mod $p$, but not integrally.  The resulting group has many cycles that have a small mod $p$ filling built by taking ``shortcuts'' through the Bestvina--Brady group, but only a very large integral filling.

To extend to higher dimensions, we use a ``suspension'' construction which is somewhat similar to that used in \cite{BBFS} to build groups with prescribed higher-order Dehn functions.  In our case, we use the fact that $H$ is the kernel of a homomorphism $G \to \mathbb{Z}$, where $G$ is a CAT(0) group.  Given such a homomorphism, the group $G *_H G$ is the kernel of a homomorphism $G \times F_2 \to \mathbb{Z}$.  (For example, if $G$ is a RAAG and $H$ is its Bestvina--Brady group, then $G \times F_2$ and $G *_H G$ are the RAAG and Bestvina--Brady group whose associated flag complex is the simplicial suspension of that of $G$ and $H$.)  To prove Theorem \ref{thm:main}, we iterate this construction and apply the following result:
\begin{thmA}\label{B} 
    Let $n \geq 2$ and let $R$ be a quotient of $\mathbb{Z}$.  Then for any group $G$ of type $\mathcal{F}^{n+1}$ satisfying $\FV^{n}_{G; R}(x) \preceq \exp(x)$, and subgroup $H$ of type $\mathcal{F}^{n}$,
    \[\FV^n_{H;R}\bigl({\overline{\FV^n_{G;R}}\,}^{-1}(x)\bigr) \preceq \FV^{n+1}_{G *_H G;R}(x) \preceq x\overline{\FV^{n+1}_{G;R}}\bigl(\overline{\FV^n_{H;R}}(x)\bigr),\]
    where we write $\overline f(x)=\max(f(x),x)$ (noting that $\overline f \sim f$).
\end{thmA}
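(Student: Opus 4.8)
The plan is to realise $G *_H G$ geometrically as a ``tree of spaces'' $Z$ over its Bass--Serre tree $T$, to prove the upper bound by cutting a cycle along $T$ and refilling one vertex region at a time, and to prove the lower bound by exhibiting a single ``suspended'' cycle and slicing any chain that fills it.

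\emph{Setup.} Fix a cocompact free $G$-CW-complex $Y$ that is $n$-connected of dimension $n+1$ and a cocompact free $H$-CW-complex $V$ that is $(n-1)$-connected of dimension $n$. Since $Y$ is $(n-1)$-connected and (as $n\ge2$) simply connected, obstruction theory over the skeleta of $V$ produces an $H$-equivariant cellular map $V\to Y$. Form $Z$ by placing a copy of $Y$ over each vertex of $T$, a copy of $V\times[0,1]$ over each edge, and gluing the two ends of each edge space into the adjacent vertex spaces by translates of $V\to Y$. Then $Z$ is a cocompact free $G *_H G$-CW-complex of dimension $n+1$; van~Kampen over $T$ gives $\pi_1Z=1$ (this uses $n\ge2$), and a direct limit of iterated Mayer--Vietoris sequences, fed by the connectivity of $Y$ and $V$, gives $H_k(Z;\mathbb{Z})=0$ for $1\le k\le n$. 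Hence $Z$ is $n$-connected; in particular $G *_H G$ is of type $\mathcal{F}^{n+1}$ and $Z$ computes $\FV^{n+1}_{G *_H G;R}$ up to $\sim$. Write $\pi\colon Z\to T$; for an edge $e$ with midpoint $c_e$, the slice $V_e:=\pi^{-1}(c_e)\cong V$ is a two-sided codimension-one subcomplex with a product neighbourhood, so capping with its dual cocycle $\phi_e$ (of sup-norm $1$) defines a chain-level transverse intersection $F\mapsto\phi_e\cap F$, supported in $V_e$, satisfying $\partial(\phi_e\cap F)=\pm\,\phi_e\cap\partial F$ and $\Mass(\phi_e\cap F)\le\Mass F$; likewise any chain splits into the parts lying on the two sides of $c_e$.

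\emph{Upper bound.} Let $z$ be an $n$-cycle with $\Mass z\le x$. Cutting $z$ at the finitely many slices it meets writes $z=\sum_u P_u$, one summand for each vertex $u$ of $T$, where $P_u$ is the part of $z$ over the region between $u$ and the midpoints of its incident edges; the $P_u$ partition the cells of $z$, so $\sum_u\Mass P_u=\Mass z$, and since $R$ is a quotient of $\mathbb{Z}$ (each cell has mass $\ge1$) at most $x$ of the $P_u$ are nonzero. For each $u$ the cycle $\partial P_u$ is supported in the incident slices, splitting as $\sum_{e\ni u}c_{e,u}$ with disjoint supports and $\sum_{e\ni u}\Mass c_{e,u}\le(n+2)\Mass P_u$, and $c_{e,u}=-c_{e,u'}$ for $e=\{u,u'\}$. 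Fill each $c_{e,u}$ inside $V_e\cong V$ by an $n$-chain $b_e$ with $\Mass b_e\le\FV^n_{H;R}(\Mass c_{e,u})$. Then $\widetilde P_u:=P_u-\sum_{e\ni u}(\pm b_e)$, with signs chosen so that $\partial\widetilde P_u=0$, is a cycle supported near $u$; the $\pm b_e$ cancel in pairs, so $\sum_u\widetilde P_u=z$; and summing over the edges at $u$ and using that $\FV^n_{H;R}$ may be taken superadditive gives $\Mass\widetilde P_u\lesssim\Mass P_u+\FV^n_{H;R}((n+2)\Mass P_u)\lesssim\overline{\FV^n_{H;R}}(\Mass P_u)$. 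Finally the region around $u$ deformation retracts onto the vertex space $\cong Y$, so pushing $\widetilde P_u$ into it (a prism chain of bounded mass) and filling there bounds $\widetilde P_u$ by $\lesssim\overline{\FV^{n+1}_{G;R}}(\overline{\FV^n_{H;R}}(\Mass P_u))$. Summing over the at most $x$ nonzero pieces and recombining (legitimate since the $\pm b_e$ cancelled) gives $\FV^{n+1}_{G *_H G;R}(x)\preceq x\,\overline{\FV^{n+1}_{G;R}}(\overline{\FV^n_{H;R}}(x))$.

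\emph{Lower bound.} Given $m$, choose an $(n-1)$-cycle $w$ in $V$ with $\Mass w\le m$ realising $\FV^n_{H;R}(m)=\FVol^n_{V;R}(w)$ (a maximum, since up to $H$ there are finitely many such cycles). Fix an edge $e$; let $w_0,w_1$ be the images of $w$ in its two adjacent vertex spaces, and pick efficient $n$-chain fillings $u$ of $w_0$ and $u'$ of $w_1$ there, so $\Mass u,\Mass u'\le\FVol^n_{Y;R}(w_i)\le\FV^n_{G;R}(Cm)\lesssim\FV^n_{G;R}(m)$. Then $\zeta:=u+(w\times[0,1])-u'$ is an $n$-cycle in $Z$ --- indeed $\partial\zeta=w_0+(w_1-w_0)-w_1=0$ --- with $\Mass\zeta\lesssim\overline{\FV^n_{G;R}}(m)=:x$. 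For any $(n+1)$-chain $F$ with $\partial F=\zeta$, the slice chain $\phi_e\cap F$ is an $n$-chain supported in $V_e\cong V$ with $\partial(\phi_e\cap F)=\pm\,\phi_e\cap\zeta=\pm\,w$, since $u,u'$ avoid $V_e$ while $(w\times[0,1])\cap V_e=w\times\{c_e\}$; so $\phi_e\cap F$ fills $w$ inside $V$, whence $\Mass F\ge\Mass(\phi_e\cap F)\ge\FVol^n_{V;R}(w)=\FV^n_{H;R}(m)$. Thus $\FV^{n+1}_{G *_H G;R}\bigl(\overline{\FV^n_{G;R}}(m)\bigr)\succeq\FV^n_{H;R}(m)$; since $\FV^{n+1}_{G *_H G;R}$ is nondecreasing this persists across gaps in the range of $\overline{\FV^n_{G;R}}$, giving $\FV^n_{H;R}\bigl(\overline{\FV^n_{G;R}}^{-1}(x)\bigr)\preceq\FV^{n+1}_{G *_H G;R}(x)$. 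The hypothesis $\FV^n_{G;R}\preceq\exp$ enters here, keeping $\Mass\zeta$ and the reparametrisation $\overline{\FV^n_{G;R}}^{-1}$ tame; it is also the property propagated when the result is iterated to obtain Theorem~A.

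\emph{Main difficulty.} The substantive work is the chain-level slicing formalism used throughout: making the cell structure near each $V_e$ a genuine product, so that ``the part of $z$ on one side of $c_e$'' is an honest subchain whose boundary lands in $V_e$, and so that $\phi_e\cap(-)$ commutes with $\partial$ up to the stated sign without increasing mass --- this last point is exactly what makes the lower bound immune to ``clever'' fillings. The other ingredients --- superadditivity of $\FV^n_{H;R}$, boundedness of the prism operators, and the $\sim$-bookkeeping that absorbs the prism and base-case terms and the constant rescalings of arguments --- are routine but must be tracked.
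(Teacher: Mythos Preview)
Your proof is correct and follows essentially the same route as the paper: build the Bass--Serre tree of spaces, prove the upper bound by cutting a cycle along edge slices, capping the resulting $(n-1)$-boundaries inside copies of $V$, and filling each closed piece in a copy of $Y$; prove the lower bound by suspending a hard $(n-1)$-cycle across one edge and slicing any filling at the midpoint. The only cosmetic differences are that the paper clips off leaf pieces one at a time via a ``reflection'' trick rather than decomposing simultaneously, and the paper is more explicit that the hypothesis $\FV^n_{G;R}\preceq\exp$ is what guarantees the range of $\overline{\FV^n_{G;R}}$ has no large gaps (your appeal to monotonicity of $\FV^{n+1}_{G*_HG;R}$ alone does not quite suffice, since the multiplicative constant hidden in $\Mass\zeta\lesssim\overline{\FV^n_{G;R}}(m)$ means you only control $\overline{\FV^n_{G;R}}^{-1}(x/C)$, and without a gap bound this need not be $\sim\overline{\FV^n_{G;R}}^{-1}(x)$).
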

The rational case is not identical, but substantially similar.

\subsection*{Acknowledgements}
The second author would like to thank Robert Young for suggesting looking at the paper \cite{ABDY}.  We would also like to thank the anonymous referee for a close reading leading to a large number of corrections and helpful suggestions, and in particular for noticing a missing assumption in Theorem \ref{B}.  Both authors were partially supported by NSF individual grant DMS-2001042.

\section{Preliminaries}

\subsection{Definition of homological filling functions with coefficients}
Recall that a group $G$ is of type $\F^n$ if there is a $K(G,1)$ with finite $n$-skeleton, or equivalently, if it acts freely and geometrically on an $(n-1)$-connected cell complex (for example, the universal cover of the $n$-skeleton of this $K(G,1)$).  A group is of type $\F^1$ if and only if it is finitely generated and $\F^2$ if and only if it is finitely presented.

As customary in geometric group theory, we use a relation $\preceq$ defined on functions $\mathbb {R}^{\geq 0} \rightarrow \mathbb{R}^{\geq 0}$ to capture inequality of growth rates.  We write $f \preceq g$ if there exists a $C > 0$ such that
\[\text{for every }x \geq 0,\quad f(x) \leq Cg(Cx + C) + Cx + C.\]
We say that $f \sim g$ whenever $f \preceq g$ and $g \preceq f$.

Let $G$ be a group of type $\F^{n + 1}$, and let $X$ be an $n$-connected complex on which it acts freely and geometrically.  First, let $R=\mathbb{Z}$ or $\mathbb{Q}$. If $\alpha$ is a cellular $n$-cycle in $X$, we let
\[\FVol^{n + 1}_{X; R}(\alpha) = \inf\{\Mass^{n + 1} (\beta) \:|\: \beta \in C_{n + 1}(X; R), \partial \beta = \alpha\},  
\]
where $\Mass^{n+1} (\beta) := \lVert\beta\rVert_{1} = \sum |b_{i}|$ provided that $\beta = \sum b_{i}\Delta_{i}$ is a sum of $(n+1)$-cells of $X$ with $b_{i} \in R$. We define the $n$-dimensional homological filling function of $X$ with coefficients in $R$ to be
\[\FV^{n + 1}_{X; R}(x) = \sup \bigl\{\FVol^{n + 1}_{X; R}(\alpha) \mid \alpha \in Z_{n}(X; \mathbb{Z}), \Mass^{n} (\alpha) \leq x\bigr\}.\]
By \cite[Lemma 1]{Young}, the growth rate of this function (up to the relation $\sim$) depends only on the quasi-isometry type of $G$, so we can write
\[\FV^{n + 1}_{G; R}(x) := \FV^{n + 1}_{X; R}(x).\]
When $n=1$, this is known as the \emph{homological Dehn function}.

Now suppose that $R$ is a finite abelian group.  We define the function $\FV^{n+1}_{X;R}$ in almost the same way, with two differences: (1) the mass of a chain is defined simply to be the number of cells in its support and (2) we define
\[\FV^{n + 1}_{X; R}(x) = \sup \bigl\{\FVol^{n + 1}_{X; R}(\alpha) \mid \alpha \in Z_{n}(X; R), \Mass^{n} (\alpha) \leq x\bigr\}.\]
That is, we maximize the filling volume over cycles with coefficients in $R$ rather than integral cycles.

It is clear that the rational filling function must use fillings of integral cycles to be interesting: otherwise one could scale any cycle until its mass is less than some threshold, and the function would always be either linear or infinite.  For $\mathbb{Z}/p\mathbb{Z}$, which is our other major example, we could fill either mod $p$ cycles or reductions of integral cycles; we use mod $p$ cycles because it works better with some of our proofs, but it's not clear whether this subtle difference in definitions makes a difference in this case.

It certainly doesn't make a difference when $n=1$, since in that case every mod $p$ cycle is the image of an integral cycle which is not too much larger:
\begin{prop} \label{lift}
    Suppose that $X$ is a graph and $p$ is any integer.  Then every cellular cycle $\alpha \in Z_1(X;\mathbb{Z}/p\mathbb{Z})$ has a preimage $\tilde \alpha \in Z_1(X;\mathbb{Z})$ such that $\Mass(\tilde \alpha) \leq p\Mass(\alpha)$.
\end{prop}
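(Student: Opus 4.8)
The plan is to take the most naive small integral lift of $\alpha$, observe that its boundary is $p$ times an integral $0$-chain, and then repair it into a cycle by subtracting $p$ times a well-chosen $\{0,1\}$-valued $1$-chain supported on $\supp(\alpha)$. The only real content is the existence of that repair chain.

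Concretely: assume $p\ge 2$, the cases $p\le 1$ being trivial. Orient each edge $e\in\supp(\alpha)$ so that the coefficient $\alpha(e)\in\mathbb{Z}/p\mathbb{Z}\setminus\{0\}$ is represented by an integer $a_e\in\{1,\dots,p-1\}$, and set $\beta=\sum_{e\in\supp(\alpha)}a_e e\in C_1(X;\mathbb{Z})$. This reduces to $\alpha$ mod $p$ and has $\Mass(\beta)\le (p-1)\Mass(\alpha)$, but it is typically not a cycle. Since $\partial$ commutes with mod-$p$ reduction and $\alpha$ is a cycle, $\partial\beta=p D$ for some integral $0$-chain $D=\sum_v d_v v$. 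If we can find a subset $E'\subseteq\supp(\alpha)$ with $\partial\bigl(\sum_{e\in E'}e\bigr)=D$, then $\tilde\alpha:=\beta-p\sum_{e\in E'}e$ is an integral cycle still reducing to $\alpha$ mod $p$, and its coefficient on each edge $e$ lies in $\{a_e,a_e-p\}$, hence has absolute value at most $p-1$; thus $\Mass(\tilde\alpha)\le (p-1)\Mass(\alpha)\le p\Mass(\alpha)$, which is even slightly stronger than claimed.

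So everything reduces to producing $E'$, i.e.\ to finding $x\in\{0,1\}^{\supp(\alpha)}$ with $Mx=D$, where $M$ is the signed incidence matrix of the oriented graph on $\supp(\alpha)$. The key point is that the scaled lift is already a fractional solution: the vector $x_e=a_e/p$ lies strictly inside the cube $[0,1]^{\supp(\alpha)}$ and satisfies $Mx=\tfrac1p\partial\beta=D$. Since incidence matrices of directed graphs are totally unimodular, the polytope $\{x:Mx=D,\ 0\le x\le 1\}$ — nonempty by the above, and bounded — has an integral vertex, which is the desired $\{0,1\}$-vector. (One can also argue this by hand, without invoking total unimodularity: the subgraph spanned by the non-integral coordinates of a feasible $x$ has no vertex of degree exactly $1$, hence contains a cycle; pushing flow around that cycle preserves $Mx$ and, for a suitable step size, makes a further coordinate integral without leaving $[0,1]$; iterate.)

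The step I expect to be the main obstacle is exactly this last one — and it is subtle for a reason. One cannot simply repair $\beta$ by routing the ``excess'' $D$ along short paths in $X$: the correction chain obtained that way can have mass on the order of $\Mass(\alpha)^2$, far exceeding the target. The virtue of the incidence-matrix formulation is that nothing is ever routed; one only re-uses flow already present in $\beta$, by flipping the signs of some of its coefficients, which automatically keeps every coefficient smaller than $p$ in absolute value. No primality of $p$ is needed anywhere.
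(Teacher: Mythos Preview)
Your proof is correct and follows the same broad outline as the paper (lift $\alpha$ to a small integral chain, observe the boundary is $pD$, subtract $p$ times a correction), but the correction you construct is genuinely different. The paper lifts with coefficients in $(-p/2,p/2]$ and takes the correction $\theta$ to be a \emph{minimal filling of $D$ in the whole graph $X$}, arguing via the integrality of optimal matchings that $\Mass(\theta)\le \tfrac1p\Mass(\eta)$ and then bounding $\Mass(\tilde\alpha)\le\Mass(\eta)+p\Mass(\theta)\le p\Mass(\alpha)$ by the triangle inequality. You instead keep the correction supported on $\supp(\alpha)$ and choose it to be a $\{0,1\}$-chain, exploiting total unimodularity (or cycle-pushing) to round the obvious fractional solution $a_e/p$; the mass bound then comes edge-by-edge rather than globally. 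Your route avoids any reference to distances or geodesics in $X$, gives the slightly sharper constant $p-1$, and keeps $\supp(\tilde\alpha)=\supp(\alpha)$; the paper's route is more geometric and ties the statement to optimal transport. The underlying integrality facts are cousins, but they are applied to different polytopes, so the two arguments are not merely rephrasings of one another.
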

\begin{proof}
  We construct $\tilde\alpha$ explicitly as follows.  Take a chain $\eta \in C_1(X;\mathbb{Z})$ which lifts $\alpha$ and has coefficients between $-p/2$ and $p/2$.  (This is unique if $p$ is odd, but may involve choices for even $p$.)  Then $\partial\eta$ is a $0$-chain with coefficients in $p\mathbb{Z}$.  We take $\tilde\alpha=\eta-p\theta$ where $\theta$ is a minimal filling of $\frac{1}{p}\partial\eta$.  To prove the lemma, it is enough to show that $\Mass(\theta) \leq \frac{1}{p}\Mass(\eta)$.
  
  To see this, notice that $\theta$ is the most efficient ``matching'' via geodesics between the positive and negative points in $\frac{1}{p}\partial\eta$, with multiplicity.  This kind of minimal matching problem always has an integer solution that is optimal even among real solutions, see e.g.~\cite[\S3.2]{MG}.   Therefore $p\theta$ is a minimal filling of $\partial\eta$.  Since $\eta$ is also a filling of $\partial\eta$, we know that $\Mass(\eta) \geq \Mass(p\theta)$.
\end{proof}
In Section 3, we will use the fact that $\FV^2_{G;\mathbb{Z}/p\mathbb{Z}}$ can be defined equivalently by maximizing over all mod $p$ cycles or only over images of integral cycles.

\subsection{Right-angled Artin groups}
The construction of the groups $G$ and $H$ in Theorem \ref{thm:main} uses some facts about right-angled Artin groups.  Here we give the definition and some of their properties.

Given a simplicial graph $\Lambda$ with vertex set $V(\Lambda)$ and edge set $E(\Lambda)$, the associated \emph{right-angled Artin group} (RAAG) has the presentation 
\[
A := \langle V(\Lambda)\:|\:[i(e), t(e)] = 1, \forall e \in E(\Lambda) \rangle, 
\]
where $i(e)$ and $t(e)$ are the endpoints of $e$.  There exists a $K(A,1)$, the \emph{Salvetti complex} $X_{A}$ of $A$, which is a one-vertex locally $\CAT(0)$ cube complex. Let $h_{A}: A \rightarrow \mathbb{Z}$ be the group homomorphism sending each generator of $A$ to $1$; there is a cube-wise linear map $h_{X_{A}}: X_{A} \rightarrow S^{1}$ which induces $h_{A}$ on fundamental groups. The lift of $h_{X_{A}}$ to the universal cover gives us a $h_{A}$-equivariant Morse function $h_{X_{A}}: \widetilde{X_{A}} \rightarrow \mathbb{R}$.  With respect to this Morse function, one defines the \emph{ascending} and \emph{descending links} of a vertex $v$ as the subcomplexes of its link which are, respectively, above and below $v$.  Since $X_{A}$ has only one vertex, we denote them by $\Lk_{\uparrow}(\widetilde{X_{A}})$ and $\Lk_{\downarrow}(\widetilde{X_{A}})$ respectively.  The following theorem describes the topology of the ascending and descending links as well as the level set $L_{A} := h_{X_{A}}^{-1}(0)$. 

\begin{thm}[Bestvina and Brady \cite{BB}] \label{BBthm}
If $\Lambda$ is the $1$-skeleton of the flag simplicial complex $Y$, then both $\Lk_{\uparrow}(\widetilde{X_{A}})$ and $\Lk_{\downarrow}(\widetilde{X_{A}})$ are isomorphic to $Y$. Moreover, with $h_{A}$, $h_{X_{A}}$ the maps defined above, $H_{A} = \ker h_{A}$ acts on the complex $L_{A} = h_{X_A}^{-1}(0)$, which is homotopy equivalent to a wedge product of infinitely many copies of $Y$, indexed by the vertices in $\widetilde{X_{A}} \setminus L_{A}$.  In fact, $L_A$ is a union of scaled copies of $Y$.
\end{thm}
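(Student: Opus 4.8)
This is a foundational result of Bestvina and Brady, and the natural plan is to recall their argument, which translates everything into the combinatorics of the $\CAT(0)$ cube complex $\widetilde{X_A}$ and PL Morse theory for $h_{X_A}$. The first step is the cube/simplex dictionary: because $Y$ is flag, the $k$-cubes of $X_A$ are in face-compatible bijection with the $(k-1)$-simplices of $Y$ (equivalently with the $k$-cliques of $\Lambda$), so the link of the single vertex of $X_A$ — and hence, by homogeneity, the link of every vertex $\tilde v$ of $\widetilde{X_A}$ — is the ``octahedralization'' of $Y$: two vertices $v^{+}, v^{-}$ for each generator $v$, a finite set of which spans a simplex exactly when the underlying generators are pairwise adjacent in $\Lambda$. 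Since $h_{X_A}$ is affine on each cube and increases by exactly $1$ along each oriented edge, the ascending link $\Lk_{\uparrow}(\widetilde{X_A})$ (resp.\ descending link $\Lk_{\downarrow}(\widetilde{X_A})$) at $\tilde v$ is the full subcomplex spanned by the vertices $v^{+}$ (resp.\ $v^{-}$); using flagness again, such a set spans a simplex iff the corresponding generators are pairwise adjacent iff they span a simplex of $Y$, so both links are copies of $Y$. This proves the first assertion, and since $H_A = \ker h_A$ preserves the levels of $h_{X_A}$ it acts freely and cocompactly on $L_A = h_{X_A}^{-1}(0)$.

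Next I would describe $L_A$ explicitly. If $C$ is a cube of $\widetilde{X_A}$ with bottom vertex at level $-r$ (so its top vertex is at level $\dim C - r$), then writing $C = [0,1]^{\dim C}$ with $h_{X_A} = -r + \sum_i x_i$, the slice $L_A \cap C$ is $\{\,\sum_i x_i = r\,\}$: empty unless $0 \le r \le \dim C$, a single vertex if $r \in \{0, \dim C\}$, and an honest $(\dim C - 1)$-simplex if $r \in \{1, \dim C - 1\}$. Organizing these slices by an extreme vertex of $C$ lying off $L_A$ exhibits $L_A$ as a union of scaled copies of $Y$: the cubes sharing a fixed off-level vertex $\tilde v$ as their bottom (if $h_{X_A}(\tilde v) < 0$) or top (if $h_{X_A}(\tilde v) > 0$) are indexed by the simplices of $Y$ via $\Lk_{\uparrow}(\tilde v)$ or $\Lk_{\downarrow}(\tilde v)$, and their level-$0$ slices glue along shared faces into a polyhedral complex $Y_{\tilde v}$ modeled on $Y$ — literally a copy of $Y$ when $\tilde v$ is adjacent to $L_A$. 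This is the last assertion.

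It remains to pin down the homotopy type of $L_A$, and this is the crux. Here I would apply Bestvina and Brady's PL Morse lemma, which says that as $t$ crosses an integer the sublevel set $h_{X_A}^{-1}((-\infty,t])$ grows by coning off the descending links, and the superlevel set $h_{X_A}^{-1}([t,\infty))$ by coning off the ascending links, of the vertices at that level — each such link being a copy of $Y$. Comparing $L_A$ with these sub/superlevel sets and using that $\widetilde{X_A}$ is contractible yields the homology of $L_A$, which turns out to be $\bigoplus \widetilde{H}_*(Y)$ over a countably infinite index set; one then upgrades this to a homotopy equivalence with an infinite wedge of copies of $Y$ — naturally indexed by the vertices of $\widetilde{X_A}\setminus L_A$ — by collapsing the scaled copies $Y_{\tilde v}$ one at a time along the (collapsible) subcomplexes where each meets the union of the earlier ones. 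The delicate point, and the main obstacle, is precisely this collapsing step: one must verify that the scaled copies fit together in a tree-like enough pattern that the collapse really does terminate in a genuine wedge. Everything else — the cube/simplex dictionary, the identification of the links, and the shapes of the individual cube slices — is a routine unwinding of definitions.
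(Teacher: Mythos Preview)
The paper does not give its own proof of this theorem; it simply records it as a citation, writing ``The proof of this is based on discrete Morse theory, see \cite[Theorems 5.12 and 8.6]{BB}.''  Your proposal is therefore not competing with an argument in the paper but is a sketch of the Bestvina--Brady proof itself, and as such it is essentially correct: the octahedralization/flagness identification of the ascending and descending links with $Y$ is exactly right, and the Morse-theoretic mechanism you describe for the homotopy type of $L_A$ is the one in \cite{BB}.

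Two small points of precision.  First, for a vertex $\tilde v$ at height $\pm r$ with $r>1$, the level-$0$ slices of the incident cubes are hypersimplices, not simplices, so the complex you call $Y_{\tilde v}$ is not literally a simplicial copy of $Y$ but a subdivided (``scaled'') one; you hint at this but the phrase ``polyhedral complex modeled on $Y$'' could be sharpened.  Second, Bestvina--Brady do not obtain the wedge decomposition by directly collapsing the $Y_{\tilde v}$ against one another as you outline; rather, they run their Morse lemma to compare $L_A$ with $h_{X_A}^{-1}([-t,t])$ as $t\to\infty$, showing that each time $t$ crosses an integer the homotopy type changes by wedging on one copy of $Y$ per vertex at that height.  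Your collapsing picture can be made rigorous, but the Morse-lemma version is cleaner and avoids the ``tree-like enough'' verification you correctly flag as the obstacle.
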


The proof of this is based on discrete Morse theory, see \cite[Theorems 5.12 and 8.6]{BB}. 

\subsection{Two general lemmas}
We first state a $\CAT(0)$ isoperimetric inequality due to Wenger \cite{Wenger}; cf.~\cite[Proposition~2.4]{ABDDY}.
\begin{prop} \label{CAT} 
If $X$ is a $\CAT(0)$ polyhedral complex and $n \geq 1$, then the $n$-dimensional homological filling function of $X$ with coefficients in $R$ satisfies 
\[\FV^{n + 1}_{X; R}(x) \preceq x^{\frac{n + 1}{n}},\]
where $R = \mathbb{Z}$, $\mathbb{Q}$, or $\mathbb{Z}/p\mathbb{Z}$, $p \geq 2$.  Moreover, there is a constant $c$ such that if $\alpha \in Z_{n}(X; R)$ (or $\alpha \in Z_n(X;\mathbb{Z})$ if $R=\mathbb{Q}$), then there exists a chain $\beta \in C_{n + 1}(X; R)$ such that $\partial \beta = \alpha$,
\[\Mass^{n + 1} (\beta) \leq c [\Mass^{n} (\alpha)]^{\frac{n + 1}{n}}, 
\]
and $\supp \beta$ is contained in a $c [\Mass^{n} (\alpha)]^{\frac{1}{n}}$-neighborhood of $\supp \alpha$. 
\end{prop}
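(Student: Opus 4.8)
The inequality asserted here is the Euclidean (Federer--Fleming) isoperimetric inequality, and for $R=\mathbb{Z}$ it is precisely the content of Wenger's theorem together with the support estimate recorded in \cite[Proposition~2.4]{ABDDY}. The plan is to reprove it via Gromov's ``coning and chopping'' argument, which survives in a $\CAT(0)$ polyhedral complex because the metric is convex, and to check that nothing in the argument is sensitive to the coefficient group, so that it applies uniformly to $R=\mathbb{Z}$, $\mathbb{Q}$, and $\mathbb{Z}/p\mathbb{Z}$. Throughout we assume, as holds in all our applications, that $X$ has bounded geometry (finitely many isometry types of cells); fillings will first be produced as singular Lipschitz chains and converted to cellular ones at the end. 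The case $R=\mathbb{Q}$ in fact needs no work beyond $R=\mathbb{Z}$: an integral filling of an integral cycle is in particular a rational one, so $\FV^{n+1}_{X;\mathbb{Q}}\le\FV^{n+1}_{X;\mathbb{Z}}$ straight from the definitions, and the same remark transfers the support estimate.

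First I would establish the \emph{coning inequality}. Fix a basepoint $x_0$ and let $H\colon[0,1]\times X\to X$ be the geodesic contraction, where $H_t(x)$ divides the geodesic $[x_0,x]$ in ratio $t:(1-t)$. Convexity of the $\CAT(0)$ metric gives $d(H_t(x),H_t(y))\le t\,d(x,y)$, so $H_t$ is $t$-Lipschitz, while $\lvert\partial_t H(x,t)\rvert=d(x_0,x)$. Hence if $\alpha$ is an $n$-cycle supported in $B(x_0,r)$, the pushforward $C\alpha:=H_*([0,1]\times\alpha)$ is an $(n+1)$-chain with $\partial(C\alpha)=\alpha$ (using $H_0\equiv x_0$ and $n\ge 1$), supported in $B(x_0,r)$, and
\[\Mass^{n+1}(C\alpha)\;\le\;\Bigl(\int_0^1 t^n\,dt\Bigr)\,r\,\Mass^n(\alpha)\;=\;\tfrac{r}{n+1}\,\Mass^n(\alpha).\]
This computation and the boundary identity are purely formal and hold for coefficients in any abelian group.

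Next I would bootstrap the coning inequality to the Euclidean exponent by induction on $n$, the base case $n=1$ being the quadratic isoperimetric inequality, i.e.\ the coning inequality applied to a loop. Given an $n$-cycle $\alpha$ with $\Mass^n(\alpha)=V$, set $\rho\sim V^{1/n}$, choose a maximal $\rho$-separated subset $\{x_i\}$ of $\supp\alpha$, and for each $i$ use a coarea/slicing inequality to select a radius $s_i\in(\rho,2\rho)$ for which $\alpha\llcorner B(x_i,s_i)$ has relative boundary of mass $\lesssim\rho^{-1}\Mass^n(\alpha\llcorner B(x_i,2\rho))$. Peeling these pieces off one at a time decomposes $\alpha$ into chains supported in the balls $B(x_i,2\rho)$, of total mass $\lesssim V$, at the cost of introducing $(n-1)$-cycles of total mass $\lesssim V/\rho$; fill those in dimension $n-1$ by the inductive hypothesis (total mass $\lesssim(V/\rho)^{n/(n-1)}\lesssim V$, support in $O(\rho)$-neighborhoods), add the fillings to the pieces to make honest $n$-cycles inside the balls, and cone each. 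The lower density bound for cycles (a standard consequence of the coning inequality) forces each ball to meet $\gtrsim\rho^n$ of $\Mass^n(\alpha)$, so there are $\lesssim V/\rho^n$ balls, and the total coning cost is $\lesssim\rho V+\rho(V/\rho)^{n/(n-1)}\lesssim V^{(n+1)/n}$ for $\rho\sim V^{1/n}$; everything produced lives in a $cV^{1/n}$-neighborhood of $\supp\alpha$. None of the slicing, density, or coning estimates sees the coefficient group, so this proves the inequality for $R=\mathbb{Z}$, $\mathbb{Q}$, and $\mathbb{Z}/p\mathbb{Z}$ simultaneously (and for $\mathbb{Z}/p\mathbb{Z}$ we simply run the argument on mod $p$ cycles, as the definition demands).

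Finally, the fillings just produced are singular, so I would invoke the Federer--Fleming deformation theorem relative to the cell structure of $X$: since $X$ has bounded geometry, any singular $R$-chain may be replaced by a cellular one of comparable mass, supported in a bounded neighborhood of the original, changing its boundary only within that neighborhood and inflating the boundary's mass by a bounded factor. Applying this to $\alpha$ and to its singular filling and reassembling yields a cellular filling of the given cellular cycle with the claimed asymptotic mass and support bounds. I expect the main technical obstacle to be the chopping step for $\mathbb{Z}/p\mathbb{Z}$ coefficients: the coarea inequality and the lower density bound, transparent for integral or real currents, must be reproved for cellular chains mod $p$, where mass counts cells in the support and there is less room to maneuver; one must also bookkeep the nested $O(\rho)$-neighborhoods carefully through the induction so that the final support estimate collapses to the claimed $c[\Mass^n(\alpha)]^{1/n}$.
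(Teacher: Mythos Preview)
Your proposal is correct and aligns with the paper's approach: the paper's proof is simply a citation to Wenger's argument (noting it goes through verbatim for $\mathbb{Z}/p\mathbb{Z}$ once mass is defined) together with the observation that the $\mathbb{Q}$ case follows from the $\mathbb{Z}$ case by definition, which is exactly your reduction. The only difference is that you unpack Wenger's coning-and-chopping argument in detail rather than citing it, and you correctly flag the one place that needs care, namely verifying coarea/slicing and lower density for mod~$p$ cellular chains.
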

\begin{proof}
For $R = \mathbb{Z}$ or $\mathbb{Z}/p\mathbb{Z}$, this follows from the proof of \cite{Wenger}.  While it is stated only for $\mathbb{Z}$ and $\mathbb{Z}/2\mathbb{Z}$, the proof works equally for all $\mathbb{Z}/p\mathbb{Z}$ once the notion of mass is properly defined.  The result for $\mathbb{Z}$ implies that for $\mathbb{Q}$ by definition.
\end{proof}

Another general lemma equates two ways of constructing groups; see \cite{Bau97} for some related results.
\begin{prop} \label{amalg}
Suppose that we have a commutative diagram with exact rows,
\[\xymatrix{1 \ar[r] & H \ar[r] \ar@{^(->}[d] & G \ar[r]^-{h} \ar@{^(->}[d]^{(\id, 1)} & \mathbb{Z} \ar[r] \ar@{=}[d] & 1 \\
    1 \ar[r] & H' \ar[r] & G \times F_{2}(u, v) \ar[r]^-{h'}& \mathbb{Z} \ar[r] & 1,}\]
where $h'$ sends $u$ and $v$ to $1$. Suppose furthermore that $G$ has a generating set such that $h$ maps every generator to $0$ or $\pm{1}$.  Then $H' \cong G *_{H} G$.
\end{prop}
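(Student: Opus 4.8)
The plan is to write down the isomorphism explicitly and verify it with the normal form theorem for amalgamated products. Let $\sigma\colon F_2(u,v)\to\mathbb{Z}$ be the exponent-sum homomorphism sending $u$ and $v$ to $1$, so that $h'(g,w)=h(g)+\sigma(w)$ and hence
\[H'=\ker h'=\{(g,w)\in G\times F_2(u,v)\;:\;h(g)=-\sigma(w)\}.\]
First I would introduce two ``twisted diagonal'' maps $\iota_u,\iota_v\colon G\to H'$ defined by $\iota_u(g)=(g,u^{-h(g)})$ and $\iota_v(g)=(g,v^{-h(g)})$. Because $h$ is a homomorphism, each of $\iota_u,\iota_v$ is an injective homomorphism (the first coordinate recovers $g$) landing in $H'$, and on the subgroup $H=\ker h$ both restrict to the prescribed inclusion $h\mapsto(h,1)$. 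The universal property of $G*_H G$ then produces a homomorphism
\[\Phi\colon G*_H G\longrightarrow H',\qquad \Phi|_{G_1}=\iota_u,\quad \Phi|_{G_2}=\iota_v,\]
where $G_1,G_2$ are the two factors; it remains to show $\Phi$ is a bijection.

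For injectivity I would apply the normal form theorem: every nontrivial element of $G*_H G$ has a unique expression $c\,t_1 t_2\cdots t_n$ with $c\in H$, each $t_i$ a nontrivial representative of an $H$-coset in $G$, the $t_i$ alternating between the two factors, and $n\geq 0$ (with $c\neq 1$ when $n=0$). Applying $\Phi$ and reading off the $F_2(u,v)$-coordinate, $c$ contributes nothing while $t_i$ contributes $u^{-h(t_i)}$ or $v^{-h(t_i)}$, alternating between powers of $u$ and powers of $v$. The key point is that a nontrivial $H$-coset representative cannot lie in $H=\ker h$, so each $h(t_i)\neq 0$ and each contribution is a nontrivial power; since consecutive contributions are powers of different generators, the $F_2(u,v)$-coordinate of $\Phi(c\,t_1\cdots t_n)$ is a nonempty reduced word in $F_2(u,v)$, hence $\neq 1$ whenever $n\geq 1$. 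When $n=0$ the image is $(c,1)\neq(1,1)$. Thus $\ker\Phi$ is trivial.

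For surjectivity, note that projection to the first coordinate restricts to a surjection $H'\to G$ (send $g$ to $(g,u^{-h(g)})$) with kernel $\{1\}\times\ker\sigma$, which is normal in $H'$ since $\ker\sigma\trianglelefteq F_2(u,v)$, and $\iota_u$ is a section of it. Hence $H'=(\{1\}\times\ker\sigma)\rtimes\iota_u(G)$, so since $\iota_u(G)\subseteq\Phi(G*_H G)$ it suffices to show $\{1\}\times\ker\sigma\subseteq\Phi(G*_H G)$. Here one computes $\iota_u(g)\,\iota_v(g)^{-1}=(1,\,u^{-h(g)}v^{h(g)})$; choosing $g$ with $h(g)=1$ (possible since $h$ is onto) puts $(1,u^{-1}v)$ in the image, and conjugating it by $\iota_u(g_0)$ as $g_0$ ranges over $G$ puts into the image every $(1,x)$ with $x$ a conjugate of $u^{-1}v$ by a power of $u$. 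These elements $x$ generate $\ker\sigma$: the subgroup $N$ they span is normalized by $u$, and by $v=(vu^{-1})u$ since $vu^{-1}\in N$, hence is normal in $F_2(u,v)$ and contains $vu^{-1}$, whose normal closure is all of $\ker\sigma$ because $F_2(u,v)/\langle\!\langle vu^{-1}\rangle\!\rangle\cong\mathbb{Z}$ via $\sigma$. Therefore $\{1\}\times\ker\sigma\subseteq\Phi(G*_H G)$, so $\Phi$ is onto and hence an isomorphism.

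Most of this is routine verification; the step I expect to require the most care is the injectivity argument, where one must be certain that the $F_2(u,v)$-coordinate of the image of a reduced word is again a reduced word — this is precisely the place where the identification of the amalgamated subgroup with $\ker h$ is used, as it is what forces each $h(t_i)\neq 0$. The only other genuinely load-bearing point is the elementary fact that $\ker\sigma\leq F_2(u,v)$ is generated by the conjugates of $vu^{-1}$ by powers of $u$. (The argument uses only that $h$ is surjective, which is immediate from exactness of the top row.)
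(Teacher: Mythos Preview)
Your proof is correct and defines exactly the same homomorphism $\Phi$ as the paper; the paper simply asserts that the reader can verify it is an isomorphism, citing \cite[Theorem 1]{Bau97} for injectivity, whereas you supply a self-contained normal-form argument for injectivity and an explicit surjectivity argument via the semidirect decomposition $H'=(\{1\}\times\ker\sigma)\rtimes\iota_u(G)$. Your closing observation that the hypothesis on the generating set of $G$ is never used is also correct---neither proof invokes it.
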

\begin{proof}
    Given $g \in G$, write $g_\ell$ and $g_r$ for the elements of $G *_H G$ corresponding to ``$g$ on the left'' and ``$g$ on the right''.  Define a map $G *_H G \to H'$ which, for any $g \in G$, sends
    \[g_\ell \mapsto gu^{-h(g)}\qquad\text{and}\qquad g_r \mapsto gv^{-h(g)}.\]
    The reader can confirm that this extends to an isomorphism.  In particular, injectivity follows from \cite[Theorem 1]{Bau97}.
\end{proof}

\section{The case $n=1$}
In this section, we prove Theorem \ref{thm:main} in the case $n=1$.  This construction fairly closely follows that of \cite{ABDY}.  In what follows, let $q$ be a prime and $d$ a positive integer or infinity.

\subsection{Constructing the groups $G$ and $H$}
We will build $G$ as the fundamental group of a graph of groups whose vertices are labeled with two CAT(0) groups: a RAAG $A_Y$ corresponding to a flag simplicial complex $Y$, and a group $Q$ which we import from \cite{ABDY} along with its desired properties.  Each edge will be labeled with the group $E = F_2 \times F_2$.

Let $K_{q}$ be a CW complex consisting of $S^{1}$ and a single $2$-cell glued on via an attaching map of degree $q$. Then
\[H_{1}(K_{q}; R) \cong 
\begin{cases}
\mathbb{Z}/q\mathbb{Z} & R=\mathbb{Z}\text{ or }\mathbb{Z}/q\mathbb{Z}\\
0 & R=\mathbb{Q}\text{ or }\mathbb{Z}/p\mathbb{Z}, \gcd(p,q)=1.
\end{cases}
\]
We equip this complex with a flag triangulation in which the $1$-cell is subdivided into four edges between four vertices $a$, $u_{1}$, $s$, $v_{1}$ in order. Label the remaining vertices $y_{1}, y_{2}, \ldots, y_\ell$.  Call the resulting simplicial complex $Y$.

By definition, $Y$ is a connected, $2$-dimensional finite flag complex with $H_{1}(Y; R) = 0$ whenever $R = \mathbb{Q}$ or $\mathbb{Z}/p\mathbb{Z}$ and $\gcd(p,q)=1$.  Moreover:
\begin{prop} \label{prop:1/q}
Any integral $1$-cycle in $Y$ has a rational filling with coefficients in $(1/q)\cdot\mathbb{Z}$.
\end{prop}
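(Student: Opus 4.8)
The plan is to reduce the statement to the computation $H_1(Y;\mathbb{Z})\cong\mathbb{Z}/q\mathbb{Z}$. First I would observe that $Y$ is nothing but a flag triangulation of the two-complex $K_q$ — the flag condition costs nothing here, since one can always pass to a barycentric subdivision — so $Y$ is homeomorphic to $K_q$ and in particular has the same homology. The cellular chain complex of $K_q$ coming from its three cells, one in each dimension $0,1,2$, is $\mathbb{Z}\xrightarrow{\times q}\mathbb{Z}\xrightarrow{0}\mathbb{Z}$: the $1$-cell is a loop, so $\partial_1=0$, while the $2$-cell is attached by a degree-$q$ map, so $\partial_2$ is multiplication by $q$. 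Hence $H_1(Y;\mathbb{Z})\cong\mathbb{Z}/q\mathbb{Z}$.

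Given this, the proposition is immediate. Let $\alpha\in Z_1(Y;\mathbb{Z})$ be any integral $1$-cycle. Its class $[\alpha]$ lies in $H_1(Y;\mathbb{Z})\cong\mathbb{Z}/q\mathbb{Z}$, so $q[\alpha]=0$; that is, $q\alpha=\partial\beta$ for some integral $2$-chain $\beta\in C_2(Y;\mathbb{Z})$. Then $\tfrac1q\beta$ is a cellular $2$-chain with coefficients in $(1/q)\cdot\mathbb{Z}$ satisfying $\partial(\tfrac1q\beta)=\alpha$, which is the desired filling. (If one prefers a more concrete form, writing $[\alpha]=k[\gamma]$ for $\gamma$ the $1$-cycle running through $a,u_1,s,v_1$ and taking $\beta$ to be a $(k/q)$-multiple of a fixed integral chain bounding $q\gamma$ plus an integral correction term gives the same thing.)

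I do not expect any real obstacle here: the content is a single homological line once $H_1(Y;\mathbb{Z})$ is identified. The only points meriting a moment's care are that subdividing to a flag complex does not change the homeomorphism type of $K_q$, and that the degree-$q$ attaching map really forces $H_1$ to be exactly $\mathbb{Z}/q\mathbb{Z}$ with no extra free summand — both clear from the chain complex above. This is also consistent, via universal coefficients, with the already-noted vanishing of $H_1(Y;R)$ for $R=\mathbb{Q}$ or $R=\mathbb{Z}/p\mathbb{Z}$ with $\gcd(p,q)=1$.
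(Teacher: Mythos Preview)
Your argument is correct, but the paper proves this by a different mechanism. Rather than invoking $H_1(Y;\mathbb{Z})\cong\mathbb{Z}/q\mathbb{Z}$ directly, the paper uses the degree-$q$ universal covering $p:\widetilde{Y}\to Y$: it defines a transfer map $\lift:C_n(Y)\to C_n(\widetilde{Y})$ sending each cell to the sum of its preimages, observes that $\lift(\gamma)$ is a cycle in the simply connected cover and hence bounds some integral chain $\beta$, and then takes $\tfrac{1}{q}p_\#\beta$ as the filling (using $p_\#\circ\lift=q\cdot\id$). Your route is more elementary and exploits a fact the paper has already recorded, namely the computation of $H_1(K_q;\mathbb{Z})$; the paper's transfer argument instead uses $\pi_1(Y)\cong\mathbb{Z}/q\mathbb{Z}$ and is slightly more constructive, in that it hands you an explicit $\beta$ coming from a filling upstairs. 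Either way the content is one line of homological algebra, and both arguments yield the same conclusion with the same denominator $q$.
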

\begin{proof}
Write $p:\widetilde{Y} \to Y$ for the universal covering map.  Define a map $\lift:C_n(Y) \to C_n(\widetilde{Y})$ which sends a cell to the sum of its preimages in the universal cover.  This is a chain map, so given a cycle $\gamma \in Z_1(Y)$, $\lift(\gamma)$ is a cycle in $Z_1(\widetilde{Y})$ with an integral filling $\beta$.  Then $\frac{1}{q}p_\#\beta$ is a filling of $\gamma$ with coefficients in $(1/q)\cdot\mathbb{Z}$.
\end{proof}
Integrally and mod $q$, the edges connecting the vertices $a$, $u_{1}$, $s$, $v_{1}$ form a homologically nontrivial cycle. Let $\Lambda$ be the $1$-skeleton of $Y$, and denote the associated RAAG by $A$. Recall that $h_{A}: A \rightarrow \mathbb{Z}$ is a group homomorphism sending each generator of $A$ to $1 \in \mathbb{Z}$. 

We take $Q := B \times F_{2}(u_{1}, v_{1})$, where $B$ is a group defined in \cite{ABDY} which satisfies the following properties:
\begin{enumerate}[(i)]
\item $B$ has a presentation with generators $a_{1}, a_{2}, \ldots, a_{m - 1}, s,$ and $t$ such that the only relation involving $s$ is $[s,t]=1$.  Moreover, the corresponding presentation complex is a $2$-dimensional locally CAT(0) cube complex $X_B$, which is a $K(B,1)$.
\item The Cayley graph of the (free) subgroup generated by $a_{i}$ and $s$ is convexly embedded in $\widetilde{X_{B}}$.
\item Let $h_{B}: B \rightarrow \mathbb{Z}$ be the group homomorphism sending $a_{i}$, $s$, $t$ to $1 \in \mathbb{Z}$, and $h_{X_{B}}: \widetilde{X_{B}} \rightarrow \mathbb{R}$ be the Morse function extended from $h_{B}$.  Then $\ker h_B$ is a free group; equivalently, the level set $L_{B}:= h_{X_{B}}^{-1}(0)$ is a tree.
\item $B$ is isomorphic to a free-by-cyclic group $F_m \rtimes_{\phi} \mathbb{Z}$, where $F_m=\ker h_B$.
\item The ascending and descending links $\Lk_{\uparrow}(X_{B})$ and $\Lk_{\downarrow}(X_{B})$ are trees.
\item The distortion of the subgroup $F_{m}=\ker h_B \subset B$ is sufficiently large:
\[\Dist_{F_m}(x) := \max\{|g|_{F_{m}}: g \in F_m, |g|_{B} \leq x\} \sim f_d(x),\]
where (as in the statement of Theorem \ref{thm:main}) $f_d(x)=e^x$ if $d=\infty$ and $x^d$ otherwise.  (We use $|w|_\Gamma$ to represent wordlength in a group $\Gamma$.)
\end{enumerate}

Note that (i) implies that $X_{Q} := X_{B} \times (S^{1} \vee S^{1})$ is a 3-dimensional $\CAT(0)$ cube complex, and hence a $K(Q, 1)$.

Let $h_{Q}: Q \rightarrow \mathbb{Z}$ be the group homomorphism sending generators $a_{i}, s, t, u_{1}, v_{1}$ to $1$.  Then by Proposition \ref{amalg}, $\ker(h_Q) \subset Q$ is isomorphic to $D :=B *_{F_m} B$, the Bieri--Stallings double of $B$.

For each $1 \leq i \leq m - 1$, the subgroup of $B$ generated by $a_{i}$ and $s$ is free, so $Q$ contains subgroups of the form $E_{i} = F_{2}(a_{i}, s) \times F_{2}(u_{1}, v_{1})$ for each $i$. On the other hand, $A$ contains the subgroup $E = F_{2}(a, s) \times F_{2}(u_{1}, v_{1})$.  Identifying each $E_i \subset Q$ with $E$ in a copy $A_i$ of $A$ gives a graph of groups with vertices $Q,  A_{1}, \cdots, A_{m - 1}$ and edges $E_{1}, \cdots, E_{m - 1}$.
Let $G$ be the fundamental group of this graph of groups. By construction, we can express $G$ and its subgroups as 
\begin{gather*}
G = \langle a_{1}, \ldots, a_{m - 1}, s, t, u_{1}, v_{1}, y_{j}^{i}\rangle \qquad i = 1, \ldots, m - 1, j = 1, \ldots, \ell;\\
Q = B \times F_{2}(u_{1}, v_{1}) = \langle a_{1}, \ldots, a_{m - 1}, s, t, u_{1}, v_{1}\rangle;\\
A_{i} = \langle a_{i}, s, u_{1}, v_{1}, y_{1}^{i}, \ldots, y_\ell^{i}\rangle;\\
E_{i} = F_{2}(a_{i}, s) \times F_{2}(u_{1}, v_{1}). 
\end{gather*} 
We extend the group homomorphisms $h_{A}: A \rightarrow \mathbb{Z}$ and $h_{Q}: Q \rightarrow \mathbb{Z}$ to $h: G \rightarrow \mathbb{Z}$. Let $H = \ker h$. 

\subsection{Properties of $G$ and $H$}
\begin{lem} \label{n=1:CAT(0)}
$G$ is a $\CAT(0)$ group.
\end{lem}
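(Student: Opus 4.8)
The plan is to realize $G$ as a group acting geometrically on a CAT(0) space assembled from the CAT(0) spaces on which the vertex groups of the graph of groups act. The vertex group $Q=B\times F_2(u_1,v_1)$ acts freely and cocompactly on $\widetilde{X_Q}=\widetilde{X_B}\times T$, where $T$ is the universal cover of $S^1\vee S^1$ (a $4$-regular tree); this is CAT(0), being a product of two CAT(0) cube complexes. Each $A_i$ acts freely and cocompactly on the universal cover $\widetilde{X_{A_i}}$ of its Salvetti complex, also a CAT(0) cube complex. I would then invoke the standard combination theorem for graphs of CAT(0) groups: if every vertex and edge group is CAT(0) and each edge group acts geometrically, compatibly on both sides, on a convex subspace of each adjacent vertex space, then the fundamental group of the graph of groups is CAT(0). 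Here the total space is built by gluing copies of the vertex spaces along copies of $(\text{edge space})\times[0,1]$ indexed over the Bass--Serre tree; since that tree has no cycles, CAT(0)-ness is preserved under iterated application of Reshetnyak's gluing theorem, and $G$ inherits a proper (indeed free) cocompact action.

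So the real content is to verify the convexity hypothesis for the edge groups $E_i$. On the RAAG side, by construction the vertices $a,u_1,s,v_1$ span an induced $4$-cycle $a-u_1-s-v_1-a$ in $\Lambda$, so the associated standard subgroup of $A_i$ is the RAAG on that $4$-cycle, which is $F_2(a,s)\times F_2(u_1,v_1)\cong E$; standard subgroups of RAAGs are convexly embedded in the universal cover of the Salvetti complex (their sub-Salvetti complex is a locally convex subcomplex), so $E$ acts geometrically on a convex subcomplex of $\widetilde{X_{A_i}}$, namely the universal cover of $(S^1\vee S^1)\times(S^1\vee S^1)$, i.e.\ $T\times T$. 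On the $Q$ side, the identification sends $E_i=F_2(a_i,s)\times F_2(u_1,v_1)$ to $E$; by property (ii), $\langle a_i,s\rangle\le B$ preserves a convex copy of its Cayley graph (again a copy of $T$) inside $\widetilde{X_B}$, while $F_2(u_1,v_1)$ acts on the whole tree factor of $\widetilde{X_Q}$, so $E_i$ acts geometrically on the subcomplex $T\times T\subset\widetilde{X_B}\times T$, which is convex as a product of convex subcomplexes. Finally, since all edges of both cube complexes have length $1$, both convex subcomplexes are canonically isometric to the Cayley complex $T\times T$ of $E\cong F_2\times F_2$, compatibly with the two $E$-actions, so the gluing data is consistent.

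The main obstacle lies entirely in these convexity checks — in particular, confirming that $\{a,u_1,s,v_1\}$ is an \emph{induced} subgraph of $\Lambda$ (which is what forces $E$ to be $F_2\times F_2$ rather than some larger standard subgroup) and that the two convex copies of $T\times T$ match up correctly under the edge identifications. Once this is done, the combination theorem supplies the CAT(0) space and the geometric $G$-action. This parallels the analogous step in \cite{ABDY}.
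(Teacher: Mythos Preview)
Your proposal is correct and follows essentially the same approach as the paper. The paper's version is slightly more compact: rather than assembling the universal cover over the Bass--Serre tree with $(\text{edge space})\times[0,1]$ pieces, it glues the \emph{quotient} complexes $X_{A_i}$ and $X_Q$ directly along $X_{E_i}=(S^1\vee S^1)\times(S^1\vee S^1)$ to obtain a single locally $\CAT(0)$ cube complex $X_G$; but the substantive point in both arguments is exactly the convexity of the edge spaces that you identify and verify.
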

\begin{proof}
Let $X_{E_i} := (S^{1} \vee S^{1}) \times (S^{1} \vee S^{1})$, along which $X_{A_i}$ and $X_{Q}$ are glued together. Since $X_{E_i}$ is a $K(E_i, 1)$ and is convex as a subset of $X_{A_i}$ and $X_Q$, attaching $m - 1$ copies of $X_{A_i}$ to $X_{Q}$ along the $X_{E_i}$ gives us a locally $\CAT(0)$ cube complex, called $X_{G}$. Since $G$ acts cocompactly on $\widetilde{X_{G}}$ by deck transformations, it is $\CAT(0)$. 
\end{proof}

Recall that $H = \ker h$.  Using the work of Bestvina and Brady \cite{BB}, $h$ extends to an $h$-equivariant Morse function $\widetilde{X_{G}} \rightarrow \mathbb{R}$, which we also call $h$ by an abuse of notation. If we set $L_{G} = h^{-1}(0)$, then $H$ acts cocompactly on the level set $L_{G}$.  To show $H$ is finitely presented, it's enough to show that $L_G$ is simply connected.

\begin{lem}\label{FinPro}
$H$ is a finitely presented group.
\end{lem}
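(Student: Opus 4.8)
By the remark preceding the statement it is enough to show that $L_G = h^{-1}(0)$ is simply connected, and the plan is to establish this via Bestvina--Brady discrete Morse theory. The complex $\widetilde{X_G}$ is contractible, being $\CAT(0)$ (Lemma \ref{n=1:CAT(0)}). By the Morse lemma, up to homotopy $\widetilde{X_G}$ is obtained from $L_G$ by successively coning off the ascending and descending links of the vertices of $\widetilde{X_G}$; since coning off a connected, simply connected complex changes neither $\pi_0$ nor $\pi_1$, and $\widetilde{X_G}$ is contractible, it suffices to prove that the ascending and descending links of the vertices of $\widetilde{X_G}$ are connected and simply connected. As $X_G$ has a single vertex, these links form one $G$-orbit each, so it is enough to understand $\Lk_{\uparrow}(\widetilde{X_G})$ and $\Lk_{\downarrow}(\widetilde{X_G})$; by symmetry I discuss only the former.

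The graph-of-spaces structure of $X_G$ exhibits $\Lk_{\uparrow}(\widetilde{X_G})$ as the union of $\Lk_{\uparrow}(\widetilde{X_Q})$ and the $\Lk_{\uparrow}(\widetilde{X_{A_i}})$ glued along the $\Lk_{\uparrow}(\widetilde{X_{E_i}})$, over the underlying graph, which is the star with center $Q$. I would compute the three kinds of pieces as follows. By Theorem \ref{BBthm}, $\Lk_{\uparrow}(\widetilde{X_{A_i}}) \cong Y$, which in particular is connected. The subgroup $E_i$ is the right-angled Artin group on the full subgraph of the $1$-skeleton of $Y$ spanned by $a_i, u_1, s, v_1$, namely the $4$-cycle $a_i - u_1 - s - v_1 - a_i$; hence $\Lk_{\uparrow}(\widetilde{X_{E_i}})$ is this $4$-cycle, and the inclusion $\Lk_{\uparrow}(\widetilde{X_{E_i}}) \hookrightarrow \Lk_{\uparrow}(\widetilde{X_{A_i}})$ is exactly the inclusion into $Y$ of the (subdivided) circle $S^1 \subseteq K_q$, which represents a generator of $\pi_1(K_q) \cong \pi_1(Y) \cong \mathbb{Z}/q\mathbb{Z}$; in particular it is surjective on $\pi_1$. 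Finally, since $X_Q = X_B \times (S^1 \vee S^1)$ and the Morse function on a product is the sum of those on the factors, $\Lk_{\uparrow}(\widetilde{X_Q}) = \Lk_{\uparrow}(\widetilde{X_B}) * \Lk_{\uparrow}(\widetilde{S^1 \vee S^1}) = \Lk_{\uparrow}(\widetilde{X_B}) * S^0$ is the unreduced suspension of $\Lk_{\uparrow}(\widetilde{X_B})$; by property (v) the latter is a tree, so $\Lk_{\uparrow}(\widetilde{X_Q})$ is contractible.

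With these computations in place, the Seifert--van Kampen theorem finishes the argument. All the pieces, and all the overlaps, are connected, so $\Lk_{\uparrow}(\widetilde{X_G})$ is connected, and its fundamental group is obtained from $\pi_1(\Lk_{\uparrow}(\widetilde{X_Q})) = 1$ by successively amalgamating with each $\pi_1(\Lk_{\uparrow}(\widetilde{X_{A_i}})) \cong \mathbb{Z}/q\mathbb{Z}$ along $\pi_1(\Lk_{\uparrow}(\widetilde{X_{E_i}})) \cong \mathbb{Z}$. Since each edge group $\mathbb{Z}$ maps onto its leaf group $\mathbb{Z}/q\mathbb{Z}$ (by the surjectivity just established) and into the trivial central group, every such pushout is trivial; hence $\Lk_{\uparrow}(\widetilde{X_G})$ is simply connected. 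The same argument applies verbatim to $\Lk_{\downarrow}(\widetilde{X_G})$, so $L_G$ is simply connected and $H$ is finitely presented.

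I expect the crux to be the bookkeeping of the middle paragraph: verifying, through the identifications of the vertex and edge groups with their Salvetti and presentation complexes, that $\Lk_{\uparrow}(\widetilde{X_{E_i}}) \hookrightarrow \Lk_{\uparrow}(\widetilde{X_{A_i}})$ really is the inclusion $S^1 \hookrightarrow K_q$ --- so that the decisive $\pi_1$-surjectivity holds --- and that the join formula for ascending links genuinely applies to $X_B \times (S^1 \vee S^1)$ with the summed Morse function. The rest is a routine application of discrete Morse theory and of van Kampen.
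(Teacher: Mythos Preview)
Your proof is correct and follows essentially the same approach as the paper: reduce to simple connectivity of the ascending and descending links via Bestvina--Brady Morse theory, decompose $\Lk_\uparrow$ as $\Lk_\uparrow(X_Q)$ glued to copies of $Y$ along the $4$-cycles $S_i$, identify $\Lk_\uparrow(X_Q)$ as the suspension of a tree, and observe that the generator of $\pi_1(Y)\cong\mathbb{Z}/q\mathbb{Z}$ already lies in $S_i$. The paper phrases the last step as ``any loop is homotopic to one in $\Lk_\uparrow(X_Q)$'' rather than invoking Seifert--van Kampen explicitly, but this is the same argument.
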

\begin{proof}
By \cite[Theorem 4.1]{BB}, it suffices to prove that the ascending and descending links of any vertex in $\widetilde{X_{G}}$ are simply-connected. Since $X_{G}$ has only one vertex, all links are copies of a single complex $\Lk(X_{G})$.

The vertices of the ascending link $\Lk_\uparrow(X_G)$ correspond to generators of $G$; we denote the vertex corresponding to a generator $x$ by $x^+$.  The complex $\Lk_{\uparrow}(X_{G})$ is formed by gluing $\Lk_{\uparrow}(X_{Q})$ and $\Lk_{\uparrow}(X_{A_{i}})$'s together along the subcomplexes $S_{i}$ spanned by $a_{i}^{+}$, $u_{1}^{+}$, $s^{+}$, and $v_{1}^{+}$. Consider $\Lk_{\uparrow}(X_{Q})$ first. The fact that $\Lk_{\uparrow}(X_{Q})$ is the suspension of a tree (with suspension points $u_{1}^{+}$ and $v_{1}^{+}$) implies that $\pi_{1}(\Lk_{\uparrow}(X_{Q})) \cong 0$. By Theorem \ref{BBthm}, we know that $\Lk_{\uparrow}(X_{A_{i}})$ is isomorphic to $Y$, and in particular $\pi_{1}(\Lk_{\uparrow}(X_{A_{i}})) \cong \mathbb{Z}/q\mathbb{Z}$ is generated by a loop that lies in $S_i$.  This means that any loop in $\Lk_{\uparrow}(X_{G})$ is homotopic to one that lies in $\Lk_{\uparrow}(X_Q)$.  Therefore, $\Lk_{\uparrow}(X_{G})$ is simply-connected.

Similarly, $\Lk_{\downarrow}(X_{G})$ must be simply-connected. It follows that $L_{G}$ is simply-connected and $H$ is finitely presented.
\end{proof}

\subsection{Upper bound on homological Dehn
functions}
Recall that the level set $L_{G} := h^{-1}(0)$. Let us define $L_{Q} := L_{G} \cap \widetilde{X_{Q}}$, $L_{A_{i}} := L_{G} \cap \widetilde{X_{A_{i}}}$, and $L_{E_{i}} = L_{G} \cap \widetilde{X_{E_{i}}}$. Since the index $i$ represents multiple copies of the same object, we sometimes drop it from the notation.

Topologically, both $L_{A}$ and $L_{E}$ are non--simply connected. By Theorem \ref{BBthm}, $L_{A}$ is homotopy equivalent to a wedge sum of scaled copies of $Y$ while $L_{E}$ is homotopy equivalent to a wedge sum of scaled copies of the square. Although $\pi_{1}(L_{A})$ is non-trivial, we have $H_{1}(L_{A}; R) = 0$ when $R = \mathbb{Q}$ or $\mathbb{Z}/p\mathbb{Z}$ and $\gcd(p,q)=1$, which enables us to fill any cycle in $L_{A}$ with coefficients in $\mathbb{Q}$ or $\mathbb{Z}/p\mathbb{Z}$. Using the methods of \cite{ABDDY,ABDY}, we can further show that the growth rate of filling functions with these coefficients is at most $x^5$.  As we shall see, this is significantly different from the growth of $\FV^{2}_{H; \mathbb{Z}}$ or $\FV^{2}_{H; \mathbb{Z}/q\mathbb{Z}}$.

\begin{prop} \label{UpperBound}
With $H$ the kernel group defined above, we have $\FV^{2}_{H; R}(x) \preceq x^{5}$ whenever $R = \mathbb{Q}$ or $\mathbb{Z}/p\mathbb{Z}$, $\gcd(p,q) = 1$.  Moreover, every integral $1$-cycle has a rational filling of mass $\preceq x^5$ whose coefficients lie in $(1/q)\cdot\mathbb{Z}$.
\end{prop}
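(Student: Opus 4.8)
The plan is to follow the template of \cite{ABDDY,ABDY}: fill an integral $1$-cycle $\alpha \subset L_G$ first in the ambient $\CAT(0)$ complex $\widetilde{X_G}$, and then ``push'' that filling into the level set $L_G$, arguing that with $R$-coefficients the push costs only a polynomial factor.

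Let $\alpha \in Z_1(L_G;\mathbb{Z})$ with $\Mass(\alpha) \le x$. Since $\widetilde{X_G}$ is $\CAT(0)$ (Lemma~\ref{n=1:CAT(0)}), Proposition~\ref{CAT} produces a chain $\beta$ with $\partial\beta = \alpha$, $\Mass(\beta) \preceq x^2$, and $\supp\beta$ inside the $cx$-neighborhood of $\supp\alpha$; I would take $\beta$ integral (using the $R=\mathbb{Z}$ case) when $R=\mathbb{Q}$, and with $\mathbb{Z}/p\mathbb{Z}$-coefficients otherwise. Because $h$ vanishes on $\supp\alpha$ and is Lipschitz for the path metric on $\widetilde{X_G}$, the chain $\beta$ is supported in the slab $h^{-1}([-c'x,c'x])$.

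The heart of the argument is to trade $\beta$ for a chain $\beta' \in C_2(L_G;R)$ with $\partial\beta' = \alpha$ still, paying only a polynomial factor in mass. Via the graph-of-groups decomposition, $\widetilde{X_G}$ is a union of translates of $\widetilde{X_Q}$ and of the $\widetilde{X_{A_i}}$ glued along translates of $\widetilde{X_{E_i}}$, and $L_G$ is correspondingly built from $L_Q$, the $L_{A_i}$, and the $L_{E_i}$. I would push $\beta$ into the level set by flowing its cells down the Morse function $h$: a cell at height $t$ is swept down to height $0$, creating at each intermediate level auxiliary $1$-cycles that must be filled. Cells lying in a translate of $\widetilde{X_Q}$ are harmless: the level set $L_B$ is a tree (property~(iii)) and $\Lk_\downarrow(X_Q)$ is a suspension of a tree, so the auxiliary cycles there bound with bounded mass. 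Cells lying in a translate of $\widetilde{X_{A_i}}$ produce auxiliary cycles that are (copies of) cycles of $Y$; these fail to bound integrally or mod $q$ --- exactly the phenomenon exploited for the lower bound in Theorem~\ref{thm:main} --- but since $H_1(Y;R)=0$ they do bound with $R$-coefficients, and by Proposition~\ref{prop:1/q} one may take the $\mathbb{Q}$-fillings with coefficients in $(1/q)\cdot\mathbb{Z}$. The local pushes match up along the $\widetilde{X_{E_i}}$ (a square has contractible level set, and the relevant $4$-cycle already bounds in $Y$ with $(1/q)\cdot\mathbb{Z}$-coefficients), so they assemble to a global $\beta'$ with $\partial\beta' = \alpha$, supported in $L_G$, and with coefficients in $(1/q)\cdot\mathbb{Z}$ when $R=\mathbb{Q}$. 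Accounting for the cost --- roughly $\preceq x^2$ cells of $\beta$, each pushed down $\preceq x$ levels, with additional polynomial factors from filling the auxiliary cycles --- then gives $\Mass(\beta') \preceq x^5$, and taking the supremum over $\alpha$ yields both assertions of the proposition.

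The step I expect to be the main obstacle is this cost accounting: one needs a genuinely quantitative form of the Morse-theoretic pushing of \cite{ABDDY}, adapted to the graph-of-groups complex $\widetilde{X_G}$, showing that flowing the $\CAT(0)$ filling all the way down to $L_G$ multiplies the mass by at most a fixed polynomial in $x$ --- together with the bookkeeping needed to coordinate the pushes across the gluing loci $\widetilde{X_{E_i}}$ and to keep all denominators inside $(1/q)\cdot\mathbb{Z}$. By contrast the ingredients that make it work --- the $\CAT(0)$ filling, the Lipschitz bound on $h$, and the vanishing $H_1(Y;R)=0$ --- are routine.
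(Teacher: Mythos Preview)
There is a genuine gap in your treatment of $\widetilde{X_Q}$. You claim that cells of $\beta$ lying in a translate of $\widetilde{X_Q}$ are ``harmless'' because the ascending and descending links of $X_Q$ are contractible (suspensions of trees), so the auxiliary link cycles encountered while flowing down bound integrally with bounded mass. That is correct topologically but does not yield a polynomial cost bound: contractibility of links controls the cost of a single step, not the growth of mass over $\preceq x$ iterated steps, and the $(cx)$-Lipschitz retraction of \cite[Theorem~4.2]{ABDDY} that underlies the estimate you have in mind is specific to Salvetti complexes of RAAGs and is not available for $\widetilde{X_Q}$. In fact your argument, if it worked, would work \emph{integrally} on $\widetilde{X_Q}$ alone (you use no $R$-coefficients there), and since a loop $\alpha\subset L_Q$ already has a $\CAT(0)$ filling inside the $\CAT(0)$ complex $\widetilde{X_Q}$, your push-down would produce for every such loop an integral filling in $L_Q$ of polynomial mass. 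That would make $\FV^2_{D;\mathbb{Z}}$ polynomial for $D=B*_{F_m}B=\ker h_Q$, contradicting Lemma~\ref{lem:BH}, which gives $\FV^2_{D;\mathbb{Z}}\succeq\Dist_{F_m}\sim f_d$.

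The paper's route is different and sidesteps this. It first reduces an arbitrary loop to subloops each lying in a single $L_A$- or $L_Q$-piece, via an inductive ``lopping-off'' using the graph-of-groups normal form and the fact that $L_E$ is undistorted; this is what contributes the extra factor of $x$ and replaces your ``coordination across $\widetilde{X_{E_i}}$''. For a subloop in $L_A$ it does exactly what you sketch: $\CAT(0)$-fill in $\widetilde{X_A}$, apply the ABDDY retraction, and fill the resulting link cycles---which are cycles in scaled copies of $Y$---with $R$-coefficients via Proposition~\ref{prop:1/q}. For a subloop in $L_Q$, however, it does \emph{not} push down inside $\widetilde{X_Q}$. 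Instead it imports the construction of \cite{ABDY}: the loop is homologous in $L_Q$, through an integral $2$-chain of mass $\preceq x^4$, to a sum of $\preceq x^2$ of the generating squares of the various $\pi_1(L_{E_i})$, each at scale $\preceq x$, and each of those squares is then filled in the adjacent copy of $L_{A_i}$ with $R$-coefficients (in $(1/q)\cdot\mathbb{Z}$ in the rational case). It is this ``shortcut through $L_A$''---not any push-down inside $\widetilde{X_Q}$---that makes the $L_Q$ part polynomial, and the hypothesis $H_1(Y;R)=0$ is used precisely there.
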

\begin{proof} 
We closely follow the proof of \cite[Proposition 6.1]{ABDY}, with some alterations.  Thus we describe the construction in a relatively informal way, relying on \cite{ABDY} for some details.

By Proposition \ref{lift}, it suffices to show that the image of any integral cycle $\alpha \in C_1(L_{G})$ of mass at most $x$ can be filled by a chain in $C_2(L_{G}; R)$ of mass $\preceq x^5$.  Since $x^5$ is a superadditive function, it is enough to show this when $\alpha$ is a loop of length $x$.

By construction, $L_{G}$ consists of copies of $L_{A}$ and $L_{Q}$ glued together along copies of $L_{E}$. So, we can first construct a filling when $\alpha$ is supported in one copy of $L_{A}$ or $L_{Q}$, and then extend to the case when $\alpha$ travels through multiple copies of $L_{A}$ and $L_{Q}$. 

Consider the filling of a loop $\alpha$ in $L_{A}$. By Proposition \ref{CAT}, we obtain an integral $2$-chain $\beta \in C_2(\widetilde{X_A};\mathbb{Z})$ which fills $\alpha$ such that $\Mass \beta \preceq x^{2}$ and $\beta$ is supported in $h_{A}^{-1}([-cx, cx])$, where $c$ is a universal constant.  We turn this into a filling in $L_A$ of mass $\preceq x^4$, as follows.  Let
\[Z=\widetilde{X_{A}} \setminus \bigcup_{v \notin L_{A}} B^{\circ}_{1/4}(v)\]
be the space formed by deleting open neighborhoods of vertices of $\widetilde{X_{A}}$ outside $L_{A}$.  According to \cite[Theorem 4.2]{ABDDY}, the resulting Swiss cheese retracts to $L_A$ via a map $\rho:Z \to L_A$.  Moreover, $\rho$ is $(cx)$-Lipschitz on $h_{A}^{-1}([-cx, cx])$. Therefore, it increases area by a factor of $\preceq x^2$, meaning that $\Mass(\rho(Z \cap \beta)) \preceq x^4$.

Now, the boundaries of the holes in $Z \cap \beta$ form a chain $\gamma=\partial(Z \cap \beta)-\alpha$ whose total length is $\preceq x^2$.  This chain lives in a union of copies of $Y$ in which it is nullhomologous.  Since $Y$ is a compact space with finite fundamental group, homological fillings of $1$-cycles in $Y$ have area linear in the length of the cycle.

The image under the retraction, $\rho(\gamma)$, lives in a union of copies of $Y$ each scaled by at most $cx$, and therefore has a filling with coefficients in $R$ of mass $\preceq x^4$; when $R=\mathbb{Q}$, this has coefficients in $(1/q)\cdot\mathbb{Z}$ by Proposition \ref{prop:1/q}.  Together with $\rho(Z \cap \beta)$, this gives a filling of $\alpha$ of mass $\preceq x^4$.

Now we consider $\alpha$ lying in $L_Q$. We use the same groups $Q$ and $E$ as in \cite{ABDY}, so their construction of fillings in $L_Q$ carries over in a straightforward way.  They show that every 1-cycle in $L_Q$ has a filling of mass $\preceq x^4$ if one takes advantage of ``shortcuts'' through $L_A$.  More precisely, recall that $\pi_1(L_{E_i})$, for each $i$, is infinitely generated by scaled copies of the square.  Any trivial word in $D$ of length $x$ is homologous in $L_Q$, through an integral chain of area $\preceq x^4$, to a sum of $\preceq x^2$ of these generators, each at scale $\preceq x$.  Each of these generators has a homological filling in $L_A$ of mass $\preceq x^2$ (in our case, with coefficients in $(1/q)\cdot\mathbb{Z} \subset \mathbb{Q}$ or in $\mathbb{Z}/p\mathbb{Z}$).

Now, let $\alpha$ be a loop which travels through multiple copies of $L_{A}$ and $L_{Q}$; in this case we again use the proof from \cite{ABDY} verbatim.  To make this situation easier to analyze, we build a new complex $L$ on which $H$ acts by ``stretching'' each copy of $L_{E}$ in $L_{G}$ into a product $L_{E} \times [0, 1]$. Since $H$ acts geometrically on $L_{G}$, it acts geometrically on the new level set $L$ as well.

The thrust of the argument is to inductively ``lop off'' subpaths of $\alpha$ living in either $L_A$ or $L_Q$, without increasing the overall length of the path.  Based on the normal form theorem for graphs of groups, there is a subpath of the form $t \gamma t'$ that enters a copy of $L_{A}$ or $L_{Q}$ through a copy of $L_{E}$, then leaves through the same $L_{E}$.  Let $\gamma'$ be a geodesic path which connects the endpoint of $t$ with the endpoint of $t'$.  Since $L_E$ is undistorted in $L$, $\gamma'$ is contained in $L_E$.  Thus $\theta=t \gamma t' (\gamma')^{-1}$ is a loop in the union of a copy of $L_{E} \times [0, 1]$ and a copy of $L_{Q}$ or $L_{A}$. This loop is filled by a $2$-chain $\beta$ with coefficients in $R$ (and, in the rational case, in $(1/q) \cdot \mathbb{Z}$) such that 
\[\Mass \beta \preceq (x + l(\gamma'))^{4} + l(\gamma') \preceq x^{4}.
\]
Repeating this process for the loop $\alpha - \theta$ inductively provides us with a filling of $\alpha$. Since there are at most $x$ steps, we conclude that the total filling of $\alpha$ has mass at most $x^{5}$ and integer multiples of $1/q$ as the coefficients. 
\end{proof}

\subsection{Lower bound on homological Dehn functions}
Now, in contrast to the previous subsection, we let the coefficient group $R$ be $\mathbb{Z}$ or $\mathbb{Z}/q\mathbb{Z}$, and prove the following. 
\begin{thm} \label{LowerBound}
$\FV^{2}_{H; R} \succeq \Dist_{F_m}$, where $\Dist_{F_m} \sim f_d$ is the distortion function of the subgroup $F_{m}=h_B^{-1}(0)$ in $B$.
\end{thm}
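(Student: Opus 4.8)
The plan is to produce, for each $x$, an integral $1$-cycle $\alpha=\alpha_x$ supported in one copy of $L_Q\subset L_G$ with $\Mass(\alpha)\preceq x$ and $\FVol^2_{L_G;R}(\alpha)\succeq\Dist_{F_m}(x)$ for $R=\mathbb{Z}$ or $\mathbb{Z}/q\mathbb{Z}$; since $\FV^2_{H;R}$ may be computed on $L_G$ and $\Dist_{F_m}\sim f_d$, this yields the theorem. Using property (vi) together with the description of $B$ as a free-by-cyclic group $F_m\rtimes_\phi\mathbb{Z}$ (property (iv)), fix $g=g_x$ of the form $t^{-k}g_0t^k$ with $g_0\in F_m$ a fixed word and $k\approx x/2$, so that $|g|_B\le x$ and $|g|_{F_m}\sim\Dist_{F_m}(x)$; everything below is built from $g$.

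\textbf{The cycle.} Work inside $\widetilde{X_Q}=\widetilde{X_B}\times T$, writing $T$ for the tree $\widetilde{S^1\vee S^1}$ with Morse function $h_T$, so $h_Q=h_B\circ pr_1+h_T\circ pr_2$. Let $w=c_1\cdots c_N$ ($N=|g|_B\le x$) be the word $t^{-1}\cdots t^{-1}g_0t\cdots t$; each $c_j$ has $h_B(c_j)=\pm1$. Since $u_1$ commutes with $B$ inside $Q$, each two-edge segment ``$c_j$ then $u_1^{-h_B(c_j)}$'' spans a square of $\widetilde{X_Q}$, and the concatenation of the level-$0$ diagonals of these squares is an edge-path $p_\ell$ of length $N\le x$ in $L_Q$ from the basepoint to $g\cdot(\text{basepoint})$ (the $T$-coordinate travels out and back along the $u_1$-axis, since $h_B(g)=0$). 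Replacing $u_1$ by $v_1$ gives another such path $p_r$. By Proposition \ref{amalg}, $\ker h_Q\cong B*_{F_m}B$, and since $g$ lies in the amalgamated subgroup $F_m$ its two copies agree there; hence $p_\ell$ and $p_r$ are homotopic rel endpoints in $L_Q$ and $\alpha:=p_\ell\cdot\overline{p_r}$ is a null-homotopic loop with $\Mass(\alpha)\le 2x$.

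\textbf{A wall-crossing lower bound for fillings in $L_Q$.} Let $\mathcal{H}\subset\widetilde{X_G}$ be the hyperplane dual to the $u_1$-edges of the first square of $p_\ell$, and $\mathcal{W}=\mathcal{H}\cap L_G$; being a hyperplane, $\mathcal{H}$ separates $\widetilde{X_G}$, so $\mathcal{W}$ separates $L_G$, and $\alpha$ meets $\mathcal{W}$ in exactly two points $z_1,z_2$, both lying on $p_\ell$ (one in its first square, one in its last; $p_r$ is disjoint from $\mathcal{W}$). Given $\beta\in C_2(L_G;R)$ with $\partial\beta=\alpha$, the part $\beta^-$ of $\beta$ on the side of $\mathcal{W}$ away from the basepoint satisfies $\partial\beta^-=\mu+\zeta$, where $\mu$ is the subpath of $p_\ell$ from $z_1$ to $z_2$ and $\zeta$ is a $1$-chain carried by $\mathcal{W}$ with $\partial\zeta=z_1-z_2$; in particular $\Mass(\zeta)\le C\,\Mass(\beta)+\Mass(\alpha)$. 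Suppose first $\beta$ is supported in $L_Q$, so $\zeta$ is carried by $\mathcal{W}\cap\widetilde{X_Q}$, which is a half-integer level set of $\widetilde{X_B}$ and hence quasi-isometric, via the Morse flow, to the tree $L_B=h_B^{-1}(0)$ (on which $F_m$ acts geometrically). Under this quasi-isometry $z_1$ and $z_2$ land within bounded distance of the basepoint and of $g\cdot(\text{basepoint})$, which are $\sim|g|_{F_m}$ apart. For $R=\mathbb{Z}$ or $\mathbb{Z}/q\mathbb{Z}$, a $1$-chain with boundary $z_1-z_2$ must cross every edge-cut of $\mathcal{W}$ that separates $z_1$ from $z_2$ (pair $\partial\zeta$ with the indicator function of one side), so its mass is at least the distance from $z_1$ to $z_2$ in $\mathcal{W}\cap\widetilde{X_Q}$, namely $\sim|g|_{F_m}$. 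Hence any filling of $\alpha$ inside $L_Q$ has mass $\succeq|g|_{F_m}\sim\Dist_{F_m}(x)$.

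\textbf{Ruling out shortcuts through the $L_{A_i}$; where the coefficients enter.} It remains to prevent a filling from becoming cheaper by wandering into the $L_{A_i}$. Writing $\beta=\beta_Q+\sum_i\beta_{A_i}$ with $\beta_Q$ carrying all cells of the $L_{E_i}$, each $\zeta_i:=\partial\beta_{A_i}$ is a $1$-cycle supported in $L_{E_i}$ that $\beta_{A_i}$ fills in $L_{A_i}$, while $\beta_Q$ fills $\alpha-\sum_i\zeta_i$ in $L_Q$. Because $L_{A_i}$ is a wedge of scaled copies of $Y$, the inclusion induces $H_1(L_{E_i};R)\to H_1(L_{A_i};R)=\bigoplus H_1(Y;R)$ sending each scaled-square generator to a scaled generator of $H_1(Y;R)=\mathbb{Z}/q\mathbb{Z}$; thus for $R=\mathbb{Z}$ or $\mathbb{Z}/q\mathbb{Z}$ the cycles $\zeta_i$ that are fillable in $L_{A_i}$ carry no $R$-homology beyond what dies in scaled copies of $Y$, and — following \cite{ABDY} — one shows that rerouting the $\zeta_i$ back into $L_Q$ (precisely the move that makes the proof of Proposition \ref{UpperBound} work when $H_1(Y;R)=0$, and which is unavailable here) increases the mass by at most a polynomial factor, so the previous bound survives: $\Mass(\beta)\succeq\Dist_{F_m}(x)$. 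The main obstacle is exactly this last step — a coefficient-sensitive accounting of how a filling in $L_G$ may exploit the pieces $\widetilde{X_{A_i}}$ without violating $H_1(Y;R)\neq0$. This is the technical heart of the argument and the only point at which the coefficient group genuinely matters.
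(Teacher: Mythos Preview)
Your construction of the loop and the wall-crossing argument for fillings supported in a single copy of $L_Q$ are reasonable and roughly parallel the paper's Lemma~\ref{lem:BH} (which slices at $\widetilde{X_F}\times\{1/2\}$ rather than at a cube-complex hyperplane; the paper's slice is already literally the Cayley graph of $F_m$, so no quasi-isometry to a half-integer level set is needed). The genuine gap is your final paragraph, which you yourself flag as unresolved.

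You correctly observe that the inclusion $\partial L_{A_i}\hookrightarrow L_{A_i}$ is injective on $H_1(\,\cdot\,;R)$ for $R=\mathbb{Z}$ or $\mathbb{Z}/q\mathbb{Z}$, since each scaled square maps to a generator of $H_1(Y;R)\cong\mathbb{Z}/q\mathbb{Z}$. But you then head in the wrong direction: you speak of ``rerouting the $\zeta_i$ back into $L_Q$'' at ``polynomial factor'' cost. Any such rerouting estimate would be both hard to prove and far too weak---a polynomial loss composed with the in-$L_Q$ bound does not obviously survive. The paper's argument (Lemma~\ref{Subtract}) is sharper and requires no rerouting at all. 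The key observation you are missing is that $\partial L_{A_i}$ is a \emph{graph}: a $1$-cycle in a $1$-complex that is null-homologous is literally the zero chain. Since $\zeta_i=\partial\beta_{A_i}$ is null-homologous in $L_{A_i}$, injectivity on $H_1$ forces it to be null-homologous in $\partial L_{A_i}$, hence $\zeta_i=0$. One may therefore simply \emph{delete} $\beta_{A_i}$ from the filling without changing its boundary. After deleting the restriction to every copy of $L_A$, the remaining pieces sit in pairwise disjoint copies of $L_Q$, so all but the one in the base copy $L_0$ also have zero boundary and may be deleted. What remains is $L_0\cap\tau$, a filling of $\gamma$ inside $L_0$ with $\Mass(L_0\cap\tau)\le\Mass(\tau)$, to which the in-$L_Q$ lower bound applies directly with no loss.

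Note also that your decomposition $\beta=\beta_Q+\sum_i\beta_{A_i}$ tacitly assumes a single copy of $L_Q$ and finitely many $L_{A_i}$; in $L_G$ there are infinitely many copies of each, arranged along a Bass--Serre tree, and the deletion argument has to be run copy by copy.
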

The overall strategy is as follows.  We first show that there is a loop $\gamma$ of length $2x + 2$ whose filling area in the double $D=B *_{F_m} B$ is $\Dist_{F_m}(x)$; this is a homological version of a theorem of Bridson and Haefliger on Dehn functions \cite[Theorem \textrm{III}.$\Gamma$.6.20]{BH}.  Then we show that this loop is no easier to fill in the larger group $H$.

\begin{lem} \label{lem:BH}
  Let $B$ be any finitely presented group and $F < B$ a finitely generated subgroup with distortion function $\delta$.  Then for any coefficient group $R$,
  \[\FV^2_{B *_F B;R}(x) \succeq \delta(x).\]
\end{lem}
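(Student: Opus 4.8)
The plan is to exhibit, for each $x$, an element $g \in F$ with $|g|_B \le x$ but $|g|_F \sim \delta(x)$ large, and to build from $g$ an explicit $1$-cycle $\gamma_g$ in (a complex for) $B *_F B$ of length $O(x)$ whose $R$-filling volume is bounded below by something proportional to $|g|_F$. The natural candidate is the ``commutator-type'' cycle coming from the amalgam: writing $B_\ell$ and $B_r$ for the two factors and $g_\ell, g_r$ for the two images of $g$, the loop $\gamma_g$ traces a word in $B_\ell$ spelling $g$ (using generators of $B$), then a word in $B_r$ spelling $g^{-1}$, using the identification of $g_\ell$ with $g_r$ in the amalgamated subgroup $F$ at both ends. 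This is exactly the homological analogue of the loop Bridson--Haefliger use for Dehn functions of amalgams, and it has length $\le 2|g|_B + O(1) \le 2x + O(1)$.

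The heart of the argument is the lower bound on $\FVol^2_{B*_F B; R}(\gamma_g)$. I would use the graph-of-groups (Bass--Serre) structure: take the $K(B*_F B,1)$ built as the double mapping cylinder of $BF \hookrightarrow BB$ on each side, pass to the universal cover $\widetilde{X}$, and observe it comes with a map to the Bass--Serre tree $T$ of the splitting, with one ``midplane'' (a copy of $\widetilde{X_F}$) over the midpoint of each edge. A $2$-chain $\beta$ with $\partial\beta = \gamma_g$ can be cut along these midplanes: intersecting $\beta$ transversally with a midplane $M$ over the edge separating $B_\ell$ from $B_r$ yields a $1$-cycle $\zeta$ in $M \simeq \widetilde{X_F}$ whose class, pushed into $\widetilde{X_B}$, must equal the class of a path spelling $g$ — concretely, $\partial(\beta \cap \{\text{left side}\}) = \gamma_g^{\text{left}} + \zeta$, and $\gamma_g^{\text{left}}$ together with $\zeta$ bounds in the left copy of $\widetilde{X_B}$. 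One then argues that $\zeta$, as a $1$-chain in the Cayley complex of $F$ (which is undistorted in the midplane but distorted in $\widetilde{X_B}$), must have mass $\succeq |g|_F \sim \delta(x)$: otherwise its endpoints, which are forced to be the vertices $1$ and $g$ of the $F$-coset, would be joined by a short path in $F$, contradicting $|g|_F$ being large. Since every $2$-cell of $\beta$ crossing $M$ contributes to $\zeta$ (with a nonzero coefficient in $R$, because boundary and intersection are compatible over any coefficient ring), this forces $\Mass^2(\beta) \succeq \delta(x)$.

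The main obstacle — and the place requiring care — is making the ``transverse intersection with the midplane'' rigorous at the level of cellular chains with coefficients in an arbitrary abelian group $R$, and in particular verifying that the intersection $\zeta$ is genuinely a cycle whose mass is controlled by the number of $2$-cells of $\beta$ meeting $M$, \emph{and} that it carries the homology class forcing the distortion estimate even when $R$ is finite (so that no cancellation can make $\zeta$ trivial while $\beta$ stays small). I would handle this by choosing the cell structure so that the midplanes are subcomplexes of a subdivision, defining the intersection via the algebraic analogue of ``linking with the midplane'' (a cellular map collapsing everything on one side), and checking the key identity $\partial(\text{collapse of }\beta) = \text{collapse of }\gamma_g + \zeta$ directly; the distortion bound on $\Mass(\zeta)$ is then a mass-lower-bound for connecting two points at $F$-distance $\delta(x)$, which is elementary. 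Finally, $\FV^2_{B*_F B;R}(2x+O(1)) \ge \FVol^2(\gamma_g) \succeq \delta(x)$ gives the claim after reparametrizing $x$, which is absorbed by $\sim$.
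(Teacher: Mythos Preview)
Your argument is correct and essentially identical to the paper's: same double-mapping-cylinder model for $B*_F B$, same loop built from $B$-geodesics for $g$ in the two copies of $\widetilde{X_B}$ (plus two crossing edges), and the same lower bound obtained by intersecting any filling with the midplane $\widetilde{X_F}\times\{1/2\}$ to produce a $1$-chain in the Cayley graph of $F$ with boundary $[g]-[1]$ (a chain, not a cycle---as you in fact use when you invoke its ``endpoints''). The only point the paper makes explicit that you leave as ``elementary'' is the $\ell^1$ mass lower bound when $R=\mathbb{Q}$: pair the intersection chain against $\delta c$ where $c(v)=d_{\widetilde{X_F}}(1,v)$, so $\lVert\delta c\rVert_\infty\le 1$ forces $\lVert\zeta\rVert_1\ge |g|_F$.
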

\begin{proof}
We start by building a complex $Y$ whose fundamental group is $B *_F B$.  Let $X_B$ be the presentation complex of $B$, and let $X_F$ be a wedge of circles with each circle labeled by a generator of $F$.  We construct $Y$ by gluing the two ends of $X_F \times [0,1]$ (given the product cell structure) to two copies of $X_B$ via cellular maps which send each circle to the corresponding generator.
  
Now given $x$, we build a loop $\gamma$ in the universal cover $\widetilde Y$ of length $2x+2$ whose filling area is at least $\delta(x)$.  First, choose an element $w \in F$ such that $|w|_B \leq x$ and $|w|_F \geq \delta(x)$.
  
Note that $\widetilde Y$ is glued together from copies of $\widetilde{X_F} \times [0,1]$ and $\widetilde{X_B}$ arranged as the edges and vertices of a tree.  Choose a basepoint $p$ in $\widetilde{X_F}$, and a base copy of $\widetilde{X_F} \times [0,1]$ which we call $Z_0 \times [0,1] \subset \widetilde Y$.  Let $Z_\ell$ and $Z_r$ be the two copies of $\widetilde{X_B}$ on either side of $Z_0$.  Let $\gamma$ start at $(p,0) \in Z_0 \times [0,1]$, follow the geodesic path in $Z_\ell$ to $(w \cdot p,0)$, go across to $(w \cdot p,1)$, follow the geodesic path in $Z_r$ to $(p,1)$, and finally cross back to $(p,0)$.
  
We claim that any cellular filling $\beta \in C_2(\widetilde Y;R)$ of $\gamma$ must have $\Mass \beta \succeq \delta(x)$.  To see this, notice that $\beta \cap \widetilde {X_F} \times \{1/2\}$ is a $1$-chain in $\widetilde{X_F}$ whose boundary is $[w \cdot p] - [p]$.  (Here we translate between the orientation of cells in $\widetilde{X_F} \times [0,1]$ and $\widetilde{X_F} \times \{1/2\}$ using the unit normal pointing towards $1$.)  Since the geodesic distance in $\widetilde{X_F}$ between these two points is at least $\delta(x)$, $\beta \cap \widetilde {X_F} \times \{1/2\}$ is supported on at least $\delta(x)$ edges.  Therefore $\beta$ is supported on at least $\delta(x)$ 2-cells.  This completes the proof when $R=\mathbb{Z}$ or a finite group.

Although we do not need the result for $R=\mathbb{Q}$, we include it for completeness.  In that case, define the cochain $c \in C^0(\widetilde{X_F})$ by $c(v)=d_{\widetilde{X_F}}(p,v)$.  Then $\lVert \delta c \rVert_\infty=1$, and \[\delta c\bigl(\beta \cap \widetilde {X_F} \times \{1/2\}\bigr)=c([w \cdot p]-[p]) \geq \delta(x),\]
which means that $\bigl\lVert \beta \cap \widetilde {X_F} \times \{1/2\} \bigr\rVert_1 \geq \delta(x)$.
\end{proof}

\begin{proof}[Proof of Theorem \ref{LowerBound}]
We choose a hard-to-fill 1-cycle $\gamma$ inside a copy of $L_Q \subset L_G$, by the same method as in the proof of the lemma.  For any $x$, choose a $w \in F$ satisfying $\lvert w \rvert_B \leq x$ and $\lvert w \rvert_F \geq \delta(x)$.  Then build a loop $\gamma$ by taking the geodesic path in one copy of $\widetilde{X_B}$ from $p$ to $w \cdot p$, and then the same geodesic path back in the other copy.

We note some facts about $\gamma$.  First, applying the proof of Lemma \ref{lem:BH} to $D=B *_{F_m} B$, we get a space $\widetilde{Y}$ which maps quasi-isometrically to $L_Q$ by contracting the interval in each copy of $\widetilde{X_F} \times [0,1]$.  This map sends the loop constructed in the proof of the lemma to $\gamma$.  The standard argument for the quasi-isometry invariance of filling functions, as in e.g.~\cite[Lemma 1]{Young}, then implies that $\gamma$ is hard to fill.

Each of the geodesics comprising $\gamma$ traverses any edge once.  Moreover, if the two geodesics share an edge, then it is an edge of $\widetilde{X_F}$ which they traverse with opposite orientation.  Therefore, viewed as an integral $1$-cycle, $\gamma$ has coefficient 1 on any edge in its support, and so it reduces to a mod $q$ cycle with the same mass.  Any integral filling reduces to a mod $q$ filling with smaller or equal mass.  Therefore, to prove the theorem, it suffices to show that $\gamma$ is hard to fill mod $q$: this also gives a lower bound on the size of an integral filling.

Now we proceed with the case $R=\mathbb{Z}/q\mathbb{Z}$.  The level set $L_G$ consists of copies of $L_{A}$ and $L_{Q}$, joined along copies of $L_{E}$.  Given one copy $L$ of $L_A$ or $L_Q$, we denote its (topological) boundary in $L_G$ by $\partial L$.  Clearly $\partial L$ is a union of copies of $L_E$.

Fix a copy $L_{0}$ of $L_{Q}$ in $L_G$, and embed $\gamma$ in $L_0$. Let $\tau$ be a simplicial chain that fills $\gamma$ in $L_G$.  Given a subcomplex $L$ of $L_G$, we write $L \cap \tau$ to denote the restriction of $\tau$ to $L$, that is, the chain whose coefficients coincide with those of $\tau$ on $L$ and are zero elsewhere.  Notice that since $L_E$ is one-dimensional, $\tau$ can be written as the sum of its restrictions to copies of $L_Q$ and $L_A$.

Our aim is to show that $\partial(L_0 \cap \tau)=\gamma$. It follows that 
\[\Dist_{F_m}(x) \preceq \Mass(L_0 \cap \tau) \leq \Mass \tau\]
as desired. To achieve this, we need the following assertion.  

\begin{lem} \label{Subtract}
If $\alpha \in C_{2}(L_{A}; \mathbb{Z}/q\mathbb{Z})$ has boundary supported in $\partial L_A$, then $\partial \alpha = 0$.
\end{lem}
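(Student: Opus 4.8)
The plan is to use the structure of $L_A$ as a union of scaled copies of $Y$ (Theorem \ref{BBthm}), together with the hypothesis $H_1(Y;\mathbb{Z}/q\mathbb{Z})=0$, which forces mod-$q$ $1$-cycles on $L_A$ to bound, hence obstructs the existence of a $2$-chain whose boundary is a nonzero cycle. Concretely, let $\alpha \in C_2(L_A;\mathbb{Z}/q\mathbb{Z})$ with $\partial\alpha$ supported in $\partial L_A$, and set $\beta := \partial\alpha$. Then $\beta$ is a $1$-cycle supported in $\partial L_A$, which is a union of copies of $L_E$; each such copy is $1$-dimensional. Since $\beta = \partial\alpha$ is a boundary, it is in particular a $1$-cycle; I want to show $\beta = 0$.

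The first step is to observe that a $1$-cycle supported in a $1$-complex (the union of copies of $L_E$ making up $\partial L_A$) which also bounds a $2$-chain $\alpha$ living in the ambient complex $L_A$ must be trivial in $H_1(L_A;\mathbb{Z}/q\mathbb{Z})$. So I need $H_1(L_A;\mathbb{Z}/q\mathbb{Z})=0$ for $R=\mathbb{Z}/q\mathbb{Z}$ — but wait, that is precisely the coefficient we cannot control, since $H_1(Y;\mathbb{Z}/q\mathbb{Z})=\mathbb{Z}/q\mathbb{Z}$. So the argument cannot merely be "$\beta$ is nullhomologous hence zero." Instead, the point must be that $\partial L_A$ carries nontrivial mod-$q$ homology (the $\{a,u_1,s,v_1\}$-cycle), but such a cycle, while nullhomologous \emph{in $Y$} (hence in $L_A$) mod $q$, is a \emph{boundary in $L_A$} of a chain that necessarily pokes out of $\partial L_A$ — contradiction with... no. Let me reconsider: the real content is that $\partial L_A$ is a \emph{union of disjoint copies of $L_E$}, each of which is a tree-like $1$-complex (copies of the square's $1$-skeleton, scaled), so $H_1(L_E;\mathbb{Z}/q\mathbb{Z})$ may be nonzero but the \emph{specific} cycle $\beta=\partial\alpha$ must restrict to each copy of $L_E$ as something that extends to a mod-$q$ $2$-chain.

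The cleanest route, which I would pursue, is: use that $L_A$ is homotopy equivalent to a wedge of scaled copies of $Y$, so $H_2(L_A;\mathbb{Z}/q\mathbb{Z}) \cong \bigoplus H_2(Y;\mathbb{Z}/q\mathbb{Z})$, and examine the long exact sequence of the pair $(L_A, \partial L_A)$ with $\mathbb{Z}/q\mathbb{Z}$ coefficients: the class $[\beta] \in H_1(\partial L_A;\mathbb{Z}/q\mathbb{Z})$ lies in the image of $\partial_* : H_2(L_A,\partial L_A;\mathbb{Z}/q\mathbb{Z}) \to H_1(\partial L_A;\mathbb{Z}/q\mathbb{Z})$, and I want to show this image meets the "realizable as $\partial$(honest $2$-chain in $L_A$)" locus only in $0$. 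Equivalently, $[\beta]$ maps to zero in $H_1(L_A;\mathbb{Z}/q\mathbb{Z})$ (since $\beta=\partial\alpha$ with $\alpha$ a genuine $2$-chain of $L_A$), AND $[\beta]$ comes from a relative class; so $[\beta] \in \ker(H_1(\partial L_A) \to H_1(L_A))= \operatorname{im}(H_1(L_A,\partial L_A) \xleftarrow{} \cdots)$... This still only gives $\beta$ nullhomologous in $\partial L_A$, not zero. Since $\partial L_A$ is $1$-dimensional, a nullhomologous mod-$q$ $1$-cycle in it need not be the zero chain. So I must instead argue at the chain level: any $2$-chain $\alpha$ of $L_A$ with $\partial\alpha$ supported in the $1$-complex $\partial L_A$ has $\partial\alpha=0$ because $\partial L_A$, being $1$-dimensional, contains no $2$-cells, so the natural surjection cannot...

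The actual key idea — and the step I expect to be the main obstacle to state correctly — is this: decompose $\alpha$ according to the scaled copies of $Y$ comprising $L_A$. Within a single scaled copy $Y'$ of $Y$, the intersection $\alpha \cap Y'$ is a $2$-chain whose boundary is supported on $Y' \cap \partial L_A$ together with the "internal seams" shared with adjacent copies; but $L_A$ is a \emph{wedge} of the $Y'$, glued only along $L_E$-type subcomplexes which sit inside $\partial L_A$. So the seams \emph{are} part of $\partial L_A$, and one reduces to: a mod-$q$ $2$-chain in a single copy of $Y$ (a finite flag complex) whose boundary is supported on a subcomplex $C \subseteq Y$ that is the image of an $L_E$, i.e.\ the subdivided $1$-cell through $a,u_1,s,v_1$ plus possibly isolated vertices. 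Here I invoke $H_2(Y;\mathbb{Z}/q\mathbb{Z})$: since $Y \simeq K_q$ has $H_2(Y;\mathbb{Z}/q\mathbb{Z}) = \mathbb{Z}/q\mathbb{Z}$ (it's the mod-$q$ reduction of the degree-$q$ $2$-cell relation), but any $2$-cycle mod $q$ is a multiple of the fundamental $2$-chain of the disk, whose boundary is $q$ times the generating loop $=0$ mod $q$ — so $2$-cycles have zero boundary trivially. For a $2$-chain that is not a cycle, $\partial\alpha$ is a genuine nonzero $1$-cycle; but then $[\partial\alpha]=0$ in $H_1(Y;\mathbb{Z}/q\mathbb{Z})$... which is $\mathbb{Z}/q\mathbb{Z}$, so "being nullhomologous" is a real constraint but does not kill the class. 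Hmm — so even in $Y$ alone the conclusion "$\partial\alpha=0$" must use that $\partial\alpha$ is supported in the \emph{specific subcomplex} $C$. On $C$, a $1$-cycle is an integer-combination-mod-$q$ of the generating loop (the only $1$-cycle supported on the path $a{-}u_1{-}s{-}v_1$ closing up is a multiple of the full loop, and isolated vertices carry no cycles); that loop generates $H_1(Y;\mathbb{Z}/q\mathbb{Z})\cong\mathbb{Z}/q\mathbb{Z}$ \emph{nontrivially}, so it cannot be $\partial\alpha$ unless its coefficient is $\equiv 0$. Therefore $\partial\alpha|_{Y'}=0$ for each $Y'$, hence $\partial\alpha=0$. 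I would write the lemma's proof in exactly this order: (1) reduce to a single scaled copy $Y'$ of $Y$ using the wedge/union structure from Theorem \ref{BBthm} and that the gluing locus lies in $\partial L_A$; (2) classify $1$-cycles supported in the relevant subcomplex $C$ of $Y$ as multiples of the $\{a,u_1,s,v_1\}$-loop; (3) invoke $H_1(Y;\mathbb{Z}/q\mathbb{Z})\cong\mathbb{Z}/q\mathbb{Z}$ generated by that loop to conclude that a nonzero such cycle cannot be a mod-$q$ boundary in $Y'$; hence $\partial\alpha=0$. The main obstacle is step (1): making precise how an arbitrary $\alpha \in C_2(L_A;\mathbb{Z}/q\mathbb{Z})$ decomposes across the infinitely many scaled copies of $Y$ and checking that no boundary can be "hidden" in the gluing seams, i.e.\ that the seams are themselves contained in $\partial L_A$ so that $\partial\alpha$ restricted to a seam is part of the given boundary data and the per-copy boundaries must individually vanish.
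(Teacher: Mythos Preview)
Your homological exploration was almost exactly the paper's argument, but you abandoned it because of a false claim. You wrote: ``Since $\partial L_A$ is $1$-dimensional, a nullhomologous mod-$q$ $1$-cycle in it need not be the zero chain.'' This is wrong. In a $1$-complex there are no $2$-cells, so $B_1=0$ and $H_1=Z_1$; a $1$-cycle is nullhomologous if and only if it is the zero chain. Had you not made this slip, you would have been one step from the paper's proof: show that the inclusion $h:\partial L_A \hookrightarrow L_A$ induces an \emph{injection} on $H_1(-;\mathbb{Z}/q\mathbb{Z})$. The paper does this by identifying both sides via the wedge descriptions from Theorem~\ref{BBthm}: $H_1(L_A;\mathbb{Z}/q\mathbb{Z})\cong\bigoplus_{j\in S_A}\mathbb{Z}/q\mathbb{Z}$, one summand per copy $Y_j$ of $Y$, and $H_1(\partial L_A;\mathbb{Z}/q\mathbb{Z})\cong\bigoplus_i\bigoplus_{j\in S_{E,i}}\mathbb{Z}/q\mathbb{Z}$, one summand per square $\Diamond_j$. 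Since each $\Diamond_j$ is precisely the generating loop of the corresponding $Y_j$ and the index sets $S_{E,i}$ are disjoint subsets of $S_A$, the map $h_*$ is the obvious inclusion of summands. Then $[\partial\alpha]=0$ in $H_1(L_A)$ forces $[\partial\alpha]=0$ in $H_1(\partial L_A)$, hence $\partial\alpha=0$ as a chain.

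Your fallback chain-level approach has a genuine gap at step~(1), and it is not merely a matter of writing details carefully. The statement that $L_A$ is a union of scaled copies of $Y$ is correct, but these copies overlap in positive-dimensional subcomplexes (for a $2$-dimensional $Y$, adjacent copies share edges), and those seams are \emph{not} all contained in $\partial L_A$: seams coming from $2$-simplices of $Y$ involving the vertices $y_j$ lie outside every copy of $L_E$. So restricting $\alpha$ to a single copy $Y'$ produces a chain whose boundary may be supported on seam edges not in $\partial L_A$, and these contributions only cancel \emph{in the sum} over all copies, not copy by copy. Thus you cannot reduce to a single $Y'$ at the chain level in the way you propose. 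The homological route avoids this entirely by working up to homotopy, where $L_A$ really is a wedge.
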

Assume the lemma.  For any copy $L$ of $L_A$ in $L_{G}$, we apply the lemma to $L \cap \tau$; it follows that 
\[\partial(\tau-L \cap \tau)=\partial \tau,\]
and so we can replace $\tau$ with $\tau-L \cap \tau$.  Thus we reduce to the case that $\tau$ is supported on copies of $L_Q$.  But since these copies are disjoint, it follows that $\partial(L \cap \tau)=0$ for every copy $L$ of $L_Q$ other than $L_0$.  Therefore, we can also subtract off the restrictions to every other copy of $L_Q$ without changing the boundary.  Thus $\partial(L_0 \cap \tau)=\partial\tau=\gamma$.
\end{proof} 

\begin{proof}[Proof of Lemma \ref{Subtract}] We know that $L_{A}$ is homotopy equivalent to an infinite wedge sum of copies of $Y$,
\[Y_{\infty} = \bigvee_{j \in S_{A}} Y_{j},\]
where $S_{A}$ is the set of vertices in $\widetilde{X_{A}} \setminus L_{A}$. Notice that $\partial L_{A}$ consists of disjoint copies of $L_{E}$, which we denote $L_{E, i}$.  Each of these is homotopy equivalent to a wedge sum of scaled copies of squares indexed by disjoint subsets $S_{E, i} \subset S_A$, 
\[\Diamond_{\infty, i} =  \bigvee_{j \in S_{E, i}} \Diamond_{j}.\]
In fact, each $\Diamond_j$ is the square which generates the first homology group of the corresponding $Y_j$.  This defines an inclusion map $\bigsqcup_i \Diamond_{\infty, i} \rightarrow Y_{\infty}$ which corresponds, under the homotopy equivalences mentioned above, to the inclusion map $h: \partial L_A = \bigsqcup_i L_{E, i} \rightarrow L_{A}$.  Therefore, $h$ induces an injection on the level of homology mod $q$,
\[h_*: \bigoplus_i H_1(L_{E, i}; \mathbb{Z}/q\mathbb{Z}) \rightarrow H_1(L_{A}; \mathbb{Z}/q\mathbb{Z}).\]

By definition, $\partial \alpha$ is homologically trivial in $H_{1}(L_{A}; \mathbb{Z}/q\mathbb{Z})$.  This implies that $\partial \alpha$ restricted to each $L_{E, i}$ is homologically trivial in $H_{1}(L_{E,i}; \mathbb{Z}/q\mathbb{Z})$. Since $L_{E, i}$ is a $1$-complex, each such restriction is actually zero as a chain.  Thus $\partial\alpha=0$.
\end{proof}

\section{Higher dimensions}

\subsection{Construction and properties}
As before, we fix a prime $q$ and a $d \in \mathbb{N} \cup \{\infty\}$.
In the previous section, we built a $\CAT(0)$ group $G$ (depending on $q$ and $d$) as well as a kernel group $H \leq G$ such that $\FV^{2}_{H; R}$ grow at distinct rates for different $R$.  In higher dimensions, we inductively construct the groups $G_{n}$, $H_{n}$ starting from $G_1=G$.  Then Theorem \ref{B} provides us a way of proving that $\FV^{n + 1}_{H_{n}; R}$ grow at distinct rates for different $R$. 

Given $n \geq 2$, we define $G_n = G_{n - 1} \times F_{2}(u_{n}, v_{n})$ and 
$h_{n}: G_{n} \rightarrow \mathbb{Z}$ to be the group homomorphism sending $u_{n}$, $v_{n}$, and each generator of $G_{n - 1}$ to $1$. If we let $H_{n} = \ker h_{n}$, then by Proposition \ref{amalg}, $H_{n} \cong G_{n - 1} *_{H_{n - 1}} G_{n - 1}$. 

The groups $G_{n}$ and $H_{n}$ satisfy a few key properties. In particular, we show that $G_{n}$ is a $\CAT(0)$ group and $H_{n}$ is a group of type $\F^{n + 1}$. 

\begin{lem}
The group $G_{n}$ acts freely and geometrically on an $(n+2)$-dimensional $\mathrm{CAT}(0)$ cube complex $\widetilde{X_{G_n}}$.
\end{lem}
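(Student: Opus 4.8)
\textbf{Proof proposal.} The plan is to induct on $n$, using the fact that $G_n$ is built from $G_{n-1}$ by a single direct product with $F_2$. The base case $n=1$ is exactly Lemma~\ref{n=1:CAT(0)}: there we saw that $X_{G_1}:=X_G$ is a compact locally $\CAT(0)$ cube complex with $\pi_1(X_G)=G$, assembled by gluing the copies $X_{A_i}$ and $X_Q$ along the convex subcomplexes $X_{E_i}$; since $X_Q=X_B\times(S^1\vee S^1)$ is $3$-dimensional and each $X_{A_i}$ is at most $3$-dimensional, $X_{G_1}$ has dimension $3=(1+2)$, and $G$ acts freely and geometrically on the $\CAT(0)$ cube complex $\widetilde{X_{G_1}}$ by deck transformations.

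For the inductive step, suppose $G_{n-1}$ is $\pi_1$ of a compact locally $\CAT(0)$ cube complex $X_{G_{n-1}}$ of dimension $n+1$, acting freely and geometrically on the universal cover $\widetilde{X_{G_{n-1}}}$. Since $G_n=G_{n-1}\times F_2(u_n,v_n)$, I would set $X_{G_n}:=X_{G_{n-1}}\times(S^1\vee S^1)$, a compact locally $\CAT(0)$ cube complex of dimension $(n+1)+1=n+2$ with $\pi_1(X_{G_n})=G_n$. Its universal cover is $\widetilde{X_{G_n}}=\widetilde{X_{G_{n-1}}}\times T$, where $T$ is the $4$-valent tree, i.e.\ the universal cover of $S^1\vee S^1$. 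A product of $\CAT(0)$ cube complexes is again a $\CAT(0)$ cube complex (the vertex links are simplicial joins of flag complexes, hence flag; simple connectivity is clear), and $G_n$ acts coordinatewise on the two factors. As both factor actions are free, cocompact, properly discontinuous, and by cubical isometries, so is the diagonal action, which closes the induction.

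This argument is essentially routine; the one point deserving care is the dimension bookkeeping, since the bound $n+2$ is precisely what makes $G_n$ an $(n+2)$-dimensional $\CAT(0)$ group as claimed in Theorem~\ref{thm:main}. Concretely, the only slightly delicate verification is in the base case: one must confirm that attaching the $X_{A_i}$ (which can carry $3$-cubes, coming from the triangles of the flag complex $Y$) to the $3$-dimensional $X_Q$ along the $2$-dimensional $X_{E_i}$ does not push the dimension above $3$. After that, each inductive step adds exactly one dimension, so $\dim\widetilde{X_{G_n}}=3+(n-1)=n+2$, as desired.
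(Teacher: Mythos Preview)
Your proof is correct and follows essentially the same approach as the paper: both use the base case Lemma~\ref{n=1:CAT(0)} and then pass to the product with copies of $S^1 \vee S^1$. The only cosmetic difference is that the paper writes the product $X_{G_n}=X_G \times (S^1 \vee S^1)^{n-1}$ in one step rather than inductively, and omits the routine verifications (dimension count, product of CAT(0) cube complexes is CAT(0)) that you spell out.
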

\begin{proof}
We showed in Lemma \ref{n=1:CAT(0)} that $G_1$ acts on a $3$-dimensional $\mathrm{CAT}(0)$ cube complex $\widetilde{X_G}$.  Since $G_n$ is the direct product of $G_1$ with $n-1$ copies of $F_2$, it acts on the universal cover $\widetilde{X_{G_n}}$ where
\[X_{G_{n}}=X_{G} \times (S^{1} \vee S^{1})^{n-1}. \qedhere\]
\end{proof}

\begin{lem} \label{lem:Fn}
For any group $G$ of type \emph{$\F^{n + 1}$} and subgroup $H$ of type \emph{$\F^{n}$}, $G *_{H} G$ is of type \emph{$\F^{n + 1}$}. 
\end{lem}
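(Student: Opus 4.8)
The plan is to construct a free, cocompact action of $\Gamma := G *_{H} G$ on an $n$-connected CW complex, which by the characterization of type $\F^{n+1}$ recalled at the start of Section 2 (a group of type $\F^{n+1}$ is exactly one acting freely and geometrically on an $n$-connected complex) is all we need. The complex will be a Bass--Serre ``tree of spaces'', and the one substantive point will be a connectivity estimate.

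First I would fix finite-dimensional models. Since $G$ is of type $\F^{n+1}$, let $Q$ be the $(n+1)$-skeleton of the universal cover of a $K(G,1)$ with finite $(n+1)$-skeleton; then $Q$ is an $(n+1)$-dimensional, $n$-connected complex on which $G$ acts freely and cocompactly. Likewise, since $H$ is of type $\F^{n}$, let $P$ be the $n$-skeleton of the universal cover of a $K(H,1)$ with finite $n$-skeleton; then $P$ is $n$-dimensional and $(n-1)$-connected, with a free cocompact $H$-action. Next I would build an $H$-equivariant cellular map $f \colon P \to Q$ (with $H$ acting on $Q$ by restriction of the $G$-action) by the usual obstruction-theoretic induction over the skeleta of $P$: the obstruction to extending over a $k$-cell lies in $\pi_{k-1}(Q)$, which vanishes for $k \le n = \dim P$ since $Q$ is $n$-connected; freeness lets one choose the extension equivariantly orbit by orbit, and cellular approximation keeps the map cellular.

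With $f$ available, let $T$ be the Bass--Serre tree of $\Gamma$ --- two orbits of vertices, with stabilizers conjugate to the two factors $G$, and one orbit of edges, with stabilizers conjugate to $H$ --- and form
\[Z = \Bigl(\,\bigsqcup_{v \in V(T)} Q_v\Bigr)\ \sqcup\ \Bigl(\,\bigsqcup_{e \in E(T)} P_e \times [0,1]\Bigr)\Big/\!\sim ,\]
where each $Q_v$ is a copy of $Q$, each $P_e$ a copy of $P$, and the two ends of $P_e \times [0,1]$ are glued to the vertex copies of $Q$ at the endpoints of $e$ via the appropriate $\Gamma$-translates of $f$; equip $Z$ with the evident CW structure. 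Then $\Gamma$ acts on $Z$; the action is free because $G$ and $H$ act freely on the pieces (an element fixing a point of some $Q_v$ or of some $P_e \times (0,1)$ must fix $v$ or $e$ and hence lies in the corresponding free stabilizer), and it is cocompact because $T/\Gamma$ is finite and $P, Q$ are compact. So everything reduces to showing that $Z$ is $n$-connected.

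This connectivity check is the crux, and it is where the numerology ``$H$ of type $\F^{n}$, $G$ of type $\F^{n+1}$'' is exactly right. I would exhaust $T$ by finite subtrees, which exhibits $Z$ as an increasing union of subcomplexes, each obtained from the previous one by attaching one mapping cylinder of $f$ along an end --- i.e.\ as a homotopy pushout of $(\text{previous}) \leftarrow P_e \to Q_v$. Starting from a single copy of $Q$, which is $n$-connected, each step preserves $n$-connectivity: van Kampen keeps the complex simply connected, and in the Mayer--Vietoris sequence of the pushout the terms $\widetilde H_k(Q_v)$ vanish for $k \le n$ while $\widetilde H_{k-1}(P_e)$ vanishes for $k \le n$ because $P$ is $(n-1)$-connected, so the new complex has trivial reduced homology through degree $n$ and Hurewicz upgrades this to $n$-connectivity. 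Since homotopy groups commute with the directed colimit, $Z$ itself is $n$-connected, and $\Gamma = G *_H G$ therefore acts freely and geometrically on an $n$-connected complex, hence is of type $\F^{n+1}$. (This finiteness property of amalgams is classical, going back to Bieri; the tree-of-spaces argument just sketched is self-contained.)
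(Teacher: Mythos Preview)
Your proof is correct and follows essentially the same approach as the paper's: build the Bass--Serre tree of spaces (the paper does this by first forming the quotient $X_G \cup_\phi (X_H \times [0,1]) \cup_\phi X_G$ and then identifying its universal cover with exactly your $Z$), and then verify $n$-connectedness by induction over finite subtrees using Mayer--Vietoris together with the Hurewicz theorem. The only cosmetic difference is that you construct the covering space directly with its $\Gamma$-action, whereas the paper constructs the quotient first; the connectivity argument is identical.
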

\begin{proof} 
Say a CW complex $X$ is \emph{$n$-aspherical} if its universal cover is $n$-connected. We would like to construct an $n$-aspherical $(n + 1)$-complex $Z$ with fundamental group $G *_{H} G$.

By assumption, there exists an $n$-aspherical $(n+1)$-complex $X_{G}$ with fundamental group $G$, and an $(n - 1)$-aspherical $n$-complex $X_{H}$ with fundamental group $H$.  Fix a cellular map $\phi:X_H \to X_G$ which induces the natural inclusion $H \hookrightarrow G$ on fundamental groups; such a map exists because $X_G$ is $n$-aspherical.  We build the complex $Z$ by taking a copy of $X_H \times [0,1]$ (with the product cell structure) and gluing it to copies of $X_G$ on either side via $\phi$.  This space has the desired fundamental group by the Seifert--van Kampen theorem.

It remains to show that $Z$ is $n$-aspherical.  Observe that $\widetilde{Z}$ consists of copies of $\widetilde{X_G}$, indexed by vertices of the Bass--Serre tree of the amalgamated free product $G *_H G$, glued along copies of $\widetilde{X_H} \times [0,1]$ which are indexed by edges of the Bass--Serre tree.  In particular, $\widetilde Z$ has an exhaustion by unions of pieces indexed by finite subtrees of the Bass--Serre tree.  To show that $Z$ is $n$-aspherical, it is enough to show that every such subspace is $n$-connected.

Let $U \subset \widetilde{Z}$ be the union of pieces indexed by a finite subtree $T$ of the Bass--Serre tree.  We show that $U$ is $n$-connected by induction on the size of $T$.  If $T$ has one vertex, then $U \cong \widetilde{X_G}$ and is $n$-connected by assumption.  Let $T'$ be a subtree attained by adding one vertex to $T$.  The corresponding subspace $U' \subset \widetilde{Z}$ can be decomposed up to homotopy equivalence as
\[U' \simeq U \cup_{\widetilde{X_H}} \widetilde{X_G}.\]
Since $U$ and $\widetilde{X_G}$ are $n$-connected and $\widetilde{X_H}$ is $(n-1)$-connected, the exactness of the Mayer--Vietoris sequence, together with the Hurewicz theorem, implies that $U'$ is $n$-connected as well.
\end{proof}

\begin{cor}
$H_{n}$ is of type \emph{$\F^{n + 1}$}. 
\end{cor}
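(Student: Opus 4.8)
The plan is to prove the corollary by a short induction on $n$, feeding the two lemmas just established into one another. The base case $n=1$ is immediate: by Lemma \ref{FinPro}, $H_1 = H$ is finitely presented, which is precisely the statement that it is of type $\F^2$.

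For the inductive step, fix $n \geq 2$ and assume $H_{n-1}$ is of type $\F^n$. First I would observe that $G_{n-1}$ is of type $\F^{n+1}$ (in fact of type $\F^\infty$): it acts freely and geometrically on a finite-dimensional $\CAT(0)$ cube complex — this is the preceding lemma, or equivalently Lemma \ref{n=1:CAT(0)} together with the product decomposition of $G_{n-1}$ as $G$ times finitely many copies of $F_2$ — and a $\CAT(0)$ complex is contractible, so the cocompact quotient is a finite $K(G_{n-1},1)$. Next, $H_{n-1} = \ker h_{n-1}$ is a subgroup of $G_{n-1}$ of type $\F^n$ by the inductive hypothesis. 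Therefore Lemma \ref{lem:Fn}, applied to the pair $(G_{n-1}, H_{n-1})$, shows that $G_{n-1} *_{H_{n-1}} G_{n-1}$ is of type $\F^{n+1}$. Finally, Proposition \ref{amalg} — applied with $G = G_{n-1}$ and the homomorphism $h_{n-1}$, whose generators map to $0$ or $\pm 1$ by construction — identifies this amalgam with $H_n = \ker h_n$. Hence $H_n$ is of type $\F^{n+1}$, and the induction is complete.

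The real content has already been packaged — Lemma \ref{lem:Fn} carries the homotopy-theoretic Mayer--Vietoris argument and Proposition \ref{amalg} supplies the translation between the kernel description of $H_n$ and its description as an amalgamated product — so I do not expect a genuine difficulty. The step most deserving of care is verifying that the subgroup over which $G_{n-1} *_{H_{n-1}} G_{n-1}$ is amalgamated is exactly the kernel subgroup $H_{n-1} \hookrightarrow G_{n-1}$, so that the hypotheses of Lemma \ref{lem:Fn} are met on the nose; this is built into the statement of Proposition \ref{amalg}, whose hypothesis on the generating set of $G_{n-1}$ holds by construction of the $G_n$. I would also note that the proof of Lemma \ref{lem:Fn} actually produces a finite $(n+1)$-dimensional complex on which $H_n$ acts freely and geometrically with $n$-connected universal cover, which is the concrete form in which type $\F^{n+1}$ is used in the filling-function estimates of the following subsection.
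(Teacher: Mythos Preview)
Your proof is correct and follows essentially the same approach as the paper: induction on $n$, with the base case $n=1$ handled by Lemma~\ref{FinPro}, and the inductive step obtained by applying Lemma~\ref{lem:Fn} to the pair $(G_{n-1},H_{n-1})$ using that $G_{n-1}$ is $\CAT(0)$ and hence of type $\F^\infty$. The only difference is cosmetic: the paper has already recorded the identification $H_n \cong G_{n-1} *_{H_{n-1}} G_{n-1}$ via Proposition~\ref{amalg} in the construction paragraph, so it does not re-invoke it in the proof of the corollary.
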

\begin{proof}
We proceed by an induction on $n$. The case $n = 1$ was shown in Lemma \ref{FinPro}. 

Given $n \geq 2$ and $H_{n - 1}$ is of type \emph{$\F^{n}$}, we consider $H_{n} \cong G_{n - 1} *_{H_{n - 1}} G_{n - 1}$. Recall that $G_{n - 1}$ is a $\CAT(0)$ group of type \emph{$\F^{\infty}$}. Applying Lemma  \ref{lem:Fn} completes the inductive step. 
\end{proof}

\subsection{Bounds on homological filling functions}
We start by proving Theorem \ref{B}, which we restate below.
\begin{thm}\label{RestateB}
Let $n \geq 2$ and let $R$ be a quotient of $\mathbb{Z}$.  For any group $G$ of type $\F^{n + 1}$ and subgroup $H$ of type $\F^{n}$,
\[\FV^{n+1}_{G *_H G;R}(x) \preceq x\overline{\FV^{n+1}_{G;R}}\bigl(\overline{\FV^n_{H;R}}(x)\bigr),\]
where (as before) we write $\overline f(x)$ to mean $\max(f(x),x)$.  In addition, if $\FV^{n}_{G; R}(x) \preceq \exp(x)$, then
\[\FV^n_{H;R}\bigl({\overline{\FV^n_{G;R}}\,}^{-1}(x)\bigr) \preceq \FV^{n+1}_{G *_H G;R}(x).\]
\end{thm}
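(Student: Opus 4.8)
The plan is to prove the two inequalities in Theorem~\ref{RestateB} by working with the geometric model $Z$ from Lemma~\ref{lem:Fn}: a copy of $X_H \times [0,1]$ glued via $\phi$ to two copies of $X_G$, so that $\widetilde Z$ is assembled from copies of $\widetilde{X_G}$ along copies of $\widetilde{X_H} \times [0,1]$ indexed by the Bass--Serre tree of $G *_H G$. Throughout, the key tool is the ``middle-slice'' trick already used in the proof of Lemma~\ref{lem:BH}: intersecting an $(n+1)$-chain $\beta$ in $\widetilde{X_H} \times [0,1]$ with $\widetilde{X_H} \times \{1/2\}$ produces an $n$-chain in $\widetilde{X_H}$ whose boundary records how $\partial\beta$ crosses the slab, and whose mass is bounded by $\Mass^{n+1}(\beta)$.

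\textbf{Upper bound.} Given an $n$-cycle $\alpha$ in $\widetilde Z$ of mass $\le x$, I would first observe that, because pieces are glued along the $(n-1)$-dimensional-up-to-homotopy complexes $\widetilde{X_H}$ (more precisely $\widetilde{X_H}\times[0,1]$, which is $n$-dimensional but whose interior slab contributes no $n$-cells transverse to the gluing beyond product cells), the cycle $\alpha$ decomposes as a sum of at most $\preceq x$ ``pieces'', one inside each copy of $\widetilde{X_G}$ or each slab, connected by $n$-cycles supported in the gluing loci $\widetilde{X_H}$. The standard graph-of-groups / normal-form bookkeeping (as in \cite{ABDY} and the $n=1$ case above) shows the total complexity of this decomposition is controlled: there are $\preceq x$ many such pieces and the ``interface'' $n$-cycles living in copies of $\widetilde{X_H}$ each have mass $\preceq x$. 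Each interface cycle in a copy of $\widetilde{X_H}$ can be filled there by an $n$-chain — wait, $\widetilde{X_H}$ is only $n$-dimensional, so instead one pushes it across the slab: an $n$-cycle in $\widetilde{X_H}$ bounds in $\widetilde{X_H}\times[0,1]$ by coning, but to close up one needs it to be nullhomologous, which it is because $\widetilde{X_H}$ is $(n-1)$-connected and hence has trivial $H_n$ only after we account for... actually the relevant point is: each piece of $\alpha$ landing in a copy of $\widetilde{X_G}$ is closed up to a cycle by attaching fillings (of mass $\preceq \overline{\FV^n_{H;R}}(x)$) of its boundary inside the adjacent $\widetilde{X_H}$'s, then filled inside $\widetilde{X_G}$ at cost $\preceq \overline{\FV^{n+1}_{G;R}}(\overline{\FV^n_{H;R}}(x))$; summing over the $\preceq x$ pieces gives $x\,\overline{\FV^{n+1}_{G;R}}(\overline{\FV^n_{H;R}}(x))$.

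\textbf{Lower bound.} Here I would adapt the construction in the proof of Theorem~\ref{LowerBound}. Given $y$, pick an $n$-cycle $\alpha$ in a single copy of $\widetilde{X_H}$ whose filling volume in $\widetilde{X_H}$ (with $R$ coefficients) is $\ge \FV^n_{H;R}(y)$ but whose mass is $\le y$. Build an $(n+1)$-cycle $\gamma$ in $\widetilde Z$ by taking a filling $\beta_L$ of $\alpha$ in the left copy $\widetilde{X_G}$ and a filling $\beta_R$ in the right copy, glued along $\alpha$: so $\gamma = \beta_L - \beta_R$, a cycle of mass $\preceq \FV^{n+1}_{G;R}(y)$ (using the $n$-cycle $\alpha$ of mass $\le y$). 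Now if $\tau$ is any $(n+1)$-chain in $\widetilde Z$ filling $\gamma$ — wait, $\gamma$ is already a cycle, so I want $\gamma$ to be an $(n+1)$-\emph{cycle} and ask for its $(n+2)$-chain filling; but $\widetilde Z$ has no $(n+2)$-cells. Instead, the right setup is: $\gamma$ is an $(n+1)$-cycle, and we want to show $\gamma$ is not nullhomologous unless $\alpha$ has a small filling in $\widetilde{X_H}$ — no. The correct move, matching the statement, is: let $\gamma = \partial(\text{something})$ and the intermediate piece... Actually the cleanest version: the $(n+1)$-cycle $\gamma$ built by gluing $\beta_L$ to $\beta_R$ along $\alpha$ across the slab (pushing $\alpha$ through $\widetilde{X_H}\times[0,1]$) has mass $\preceq \overline{\FV^{n+1}_{G;R}}(y)$, hence mass $\le$ some $x$ with $x \preceq \overline{\FV^{n+1}_{G;R}}(y)$, i.e. $y \succeq \overline{\FV^{n+1}_{G;R}}^{-1}(x)$; and any $(n+2)$-chain... there being none, we instead bound its filling as an $(n+1)$-cycle being the boundary of $\tau \in C_{n+2}$ — impossible. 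So $\gamma$ must be analyzed as: we don't fill $\gamma$; rather $\gamma$ IS a nontrivial class, contradiction, showing $\FV^{n+1}$ is large. Concretely: suppose $\tau$ fills $\gamma$ with small mass. Restrict $\tau$ to the slab copy $\widetilde{X_H}\times[0,1]$ containing the pushed-across $\alpha$, intersect with $\widetilde{X_H}\times\{1/2\}$: this yields an $n$-chain in $\widetilde{X_H}$ whose boundary is (a copy of) $\alpha$'s ``trace'', forcing $\Mass^{n+1}(\tau) \ge \Mass^n(\text{this slice}) \ge \FVol^n_{\widetilde{X_H};R}(\alpha) \ge \FV^n_{H;R}(y) \succeq \FV^n_{H;R}(\overline{\FV^n_{G;R}}^{-1}(x))$ — which requires $y$ to be chosen as $\overline{\FV^n_{G;R}}^{-1}(x)$ and uses $\FV^n_{G;R}\preceq\exp$ to ensure $\FV^{n+1}_{G;R}$ applied to $y$ doesn't overshoot. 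This is the delicate bookkeeping that makes the two nested inverse functions match up.

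\textbf{Main obstacle.} The hard part will be the lower bound's calibration: making the chain of inequalities relating $\Mass^{n+1}(\tau)$, the middle-slice mass, $\FVol^n_{\widetilde{X_H};R}(\alpha)$, and the argument sizes come out to exactly $\FV^n_{H;R}(\overline{\FV^n_{G;R}}^{-1}(x))$. One must choose the hard-to-fill $n$-cycle $\alpha$ of mass $\le y := \overline{\FV^n_{G;R}}^{-1}(x)$ so that (i) the resulting $(n+1)$-cycle $\gamma$ has $\Mass^{n+1}\gamma \le x$ — this is where $\FV^n_{G;R}(y) \le x$ is used, via the hypothesis $\FV^n_{G;R}\preceq\exp$ keeping things controlled — and (ii) the middle-slice lower bound genuinely sees all of $\alpha$'s filling difficulty, which requires checking that $\tau$ cannot "cheat" by routing its filling through other copies of $\widetilde{X_G}$ in the Bass--Serre tree; this is handled, as in Lemma~\ref{Subtract} and the proof of Theorem~\ref{LowerBound}, by a restriction-and-subtraction argument exploiting that distinct copies of $\widetilde{X_G}$ meet only along the $(n-1)$-connected $\widetilde{X_H}$'s, so boundaries localize. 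A secondary technical point is that $\widetilde{X_H}$ is not literally $(n-1)$-dimensional, only $(n-1)$-connected, so the "slice is a chain, hence if nullhomologous then zero" shortcut from the $n=1$ proof is unavailable; instead one argues at the level of $H_n(\widetilde{X_H};R)$ directly, which is where $(n-1)$-connectedness of $\widetilde{X_H}$ (giving $H_n$ as the obstruction group) enters.
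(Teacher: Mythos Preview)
Your overall architecture matches the paper's: work in the complex $\widetilde Z$ from Lemma~\ref{lem:Fn}, use a middle-slice intersection for the lower bound, and cut along the Bass--Serre tree for the upper bound. But there are genuine gaps in both halves.

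\textbf{Lower bound.} Your indices are off by one throughout, and this is not cosmetic. The paper takes $\alpha$ to be a hard-to-fill \emph{$(n-1)$-cycle} in $\widetilde{X_H}$ (so that $\FVol^n_{\widetilde{X_H};R}(\alpha)\geq \FV^n_{H;R}(y)$), fills it by $n$-chains $\beta,\beta'$ in the two adjacent copies of $\widetilde{X_G}$, and assembles an \emph{$n$-cycle} $\gamma=\beta+\alpha\times[0,1]-\beta'$ of mass $\preceq \overline{\FV^n_{G;R}}(y)$. Your version takes $\alpha$ to be an $n$-cycle, builds an $(n+1)$-cycle, and invokes $\FV^{n+1}_{G;R}$; as you yourself noticed, there are then no $(n+2)$-cells to fill with, and the function appearing in the final inequality would be the wrong one. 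Once the dimensions are corrected, the slice argument is immediate: for any $(n+1)$-filling $\eta$ of $\gamma$, the intersection of $\eta$ with the single component of $\widetilde W$ that $\gamma$ crosses is already an $n$-chain in that copy of $\widetilde{X_H}$ with boundary $\alpha$, so its mass---and hence $\Mass^{n+1}(\eta)$---is at least $\FV^n_{H;R}(y)$. No restriction-and-subtraction in the style of Lemma~\ref{Subtract} is needed, and there is no issue with $\widetilde{X_H}$ being $n$-dimensional; the argument never uses ``nullhomologous implies zero''. The hypothesis $\FV^n_{G;R}\preceq\exp$ enters only to ensure that every large $x$ lies within a bounded factor of some value $\overline{\FV^n_{G;R}}(y)$, so that the inequality established for such $x$ extends to all $x$.

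\textbf{Upper bound.} Your ``cap every piece at its interfaces, then fill each capped piece in $\widetilde{X_G}$'' does not give the stated bound as written. A single piece can meet many components of $\widetilde W$, so after capping its mass may be as large as $x\cdot\overline{\FV^n_{H;R}}(x)$, and filling it costs $\overline{\FV^{n+1}_{G;R}}\bigl(x\cdot\overline{\FV^n_{H;R}}(x)\bigr)$, which overshoots. The paper avoids this with a \emph{reflection} step: process a leaf $Y_0$ of the finite subtree (which meets the rest through a single component $W_0$), and replace $\gamma$ by $\gamma'$ obtained by subtracting $\gamma\cap Y_0$ and adding its mirror image in the adjacent copy of $\widetilde{Y_G}$. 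Then $\Mass^n(\gamma')\leq\Mass^n(\gamma)$, and $\gamma-\gamma'$ is supported in two adjacent pieces, so the two-piece estimate bounds $\FVol(\gamma-\gamma')\preceq \overline{\FV^{n+1}_{G;R}}\bigl(\overline{\FV^n_{H;R}}(x)\bigr)$. Iterating at most $x$ times yields the claimed product. This is the higher-dimensional analogue of the ``lop off and replace by a geodesic'' step you cite from the $n=1$ argument; your sketch references that step but does not implement any mass-controlling analogue here.
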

\begin{rmks}
\begin{enumerate}[(a)]
\item The decoration $\overline f$ specifies that the function must be at least linear.  Since sublinear functions are $\sim$-equivalent to linear ones, this simply chooses a ``reasonable'' representative of the equivalence class.
\item An analogous proof shows that
\[\FV^n_{H;\mathbb{Q}}\bigl(\overline{\FV^n_{G;\mathbb{Z}}}^{-1}(x)\bigr) \preceq \FV^{n+1}_{G *_H G;\mathbb{Q}}(x).\]
\end{enumerate}
\end{rmks}
\begin{proof}
Let $X_G$, $X_H$, and $Z=X_G \cup_\phi (X_H \times [0,1]) \cup_\phi X_G$ be complexes with fundamental groups $G$, $H$, and $G *_H G$, as in the proof of Lemma \ref{lem:Fn}. We denote the space $X_{H} \times \{1/2\}$ by $W \subset Z$. If $\widetilde{W}$ is the preimage of $W$ in $\widetilde{Z}$, then each component of $\widetilde{W}$ is homeomorphic to $\widetilde{X_{H}}$ and lies in the middle of a copy of $\widetilde{X_{H}} \times [0, 1]$.

\subsubsection*{Lower bound}
For the lower bound, it suffices to construct, for enough values of $y$, an $n$-cycle that has boundary mass $\preceq \overline{\FV^{n}_{G; R}}(y)$ and filling volume $\succeq \FV^{n}_{H; R}(y)$. Let $\alpha$ be a hard-to-fill $(n - 1)$-cycle in $\widetilde{X_{H}}$ with boundary mass $y$. The minimal filling $\omega$ of $\alpha$ in $\widetilde{X_{H}}$ has volume $\FV^{n}_{H; R}(y)$. Since $\widetilde{X_{H}} \times \{0\}$ embeds in $\widetilde{X_{G}}$, the minimal filling $\beta$ of $\tilde\phi_\#(\alpha \times \{0\})$ in $\widetilde{X_{G}}$ satisfies
\[\Mass^{n} (\beta) \preceq \FV^{n}_{G; R}(y).\]
The same holds for the minimal filling $\beta'$ of $\tilde\phi_\#(\alpha \times \{1\})$ in the corresponding other copy of $\widetilde{X_{G}}$. The fillings $\beta$ and $\beta'$, together with $\alpha \times [0, 1]$, form an $n$-cycle $\gamma$ with
\[\Mass^{n} (\gamma) \preceq \FV^{n}_{G; R}(y) + y \preceq \overline{\FV^{n}_{G; R}}(y).\]

We now argue that any filling $\eta \in C_{n+1}(\widetilde Z;R)$ of $\gamma$ must have $\Mass^{n + 1} (\eta) \succeq \FV^{n}_{H; R}(y)$.  To see this, notice that (for any consistent choice of transverse orientation) $\eta \cap \widetilde W$ is a cellular filling of $\alpha \times \{1/2\}$ in $\widetilde{X_H} \times \{1/2\}$.  Therefore
\[\FV^{n}_{H; R}(y) \preceq \Mass^n(\eta \cap \widetilde W) \leq \Mass^{n+1}(\eta).\]
This shows that $\FV^n_{H;R}(y) \preceq \FV^{n+1}_{G *_H G;R}(\FV^n_{G;R}(y))$, and therefore shows the lower bound on $\FV^{n+1}_{G *_H G;R}(x)$ for any $x$ in the image of $\FV^n_{G;R}$.

To conclude, we use the assumption that $\FV^n_{G;R}(y) \preceq \exp(y)$.  Then there is a constant $C>1$ such that for any sufficiently large $x$, there is some $y$ such that $x<\FV^n_{G;R}(y)<Cx$, so that
\[\FV^n_{H;R}\bigl({\overline{\FV^n_{G;R}}\,}^{-1}(x)\bigr) \preceq \FV^{n+1}_{G *_H G;R}(Cx).\]
In the absence of this assumption, we have proved that one function grows at least as fast as another when restricted to an infinite, but potentially sparse sequence of points.  This is insufficient to prove faster growth overall, even for functions that occur as asymptotic invariants in geometric group theory: see for example the groups of oscillating intermediate growth constructed in \cite{KaPak}.

\begin{figure}[t]
\setlength{\unitlength}{1cm}
\centering
\begin{picture}(11,8)(0,0)
\put (2,0) {\includegraphics[scale = 0.5]{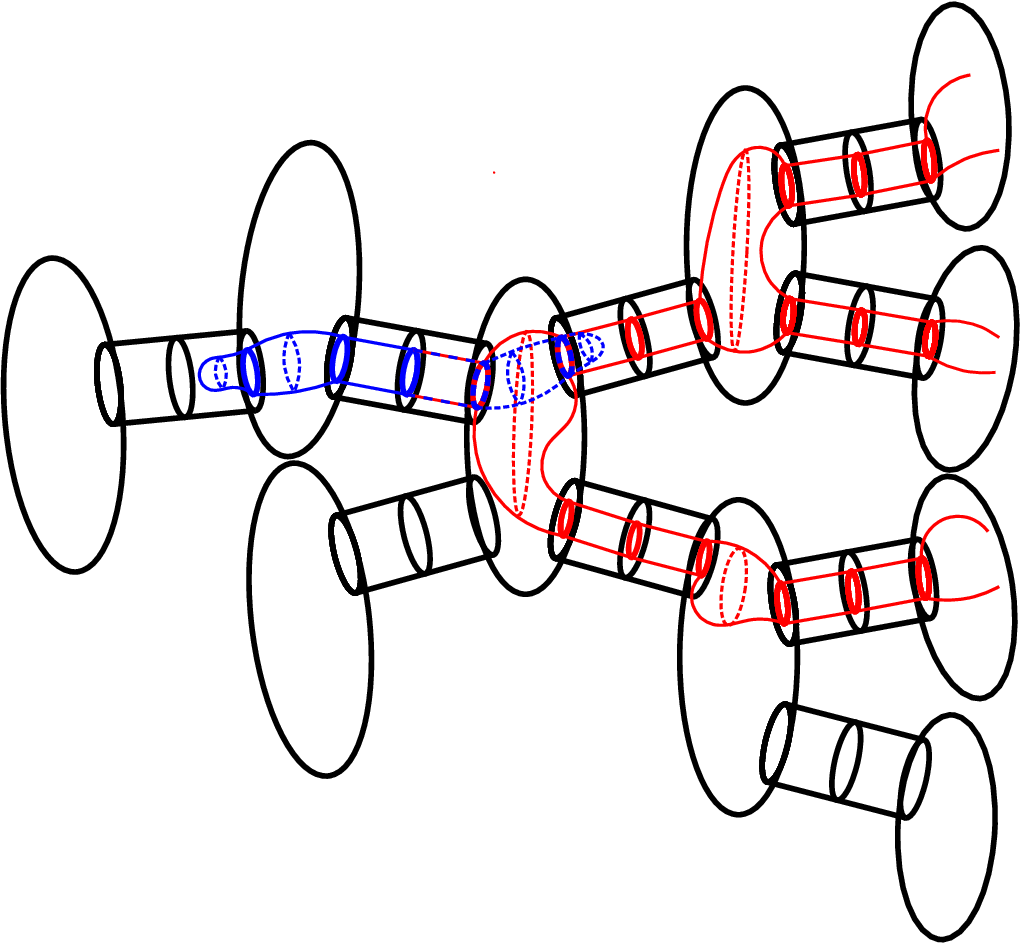}}
\put(3.55,4.8){\vector(-1,-4){0.7}}
\put(0.7, 1.6){one component of $\widetilde{W}$}
\put(2.95, 4.7){\vector(-1, 1){1.2}}
\put(0.1, 6.15){a copy of $\widetilde{X_{H}} \times [0,1]$} 
\put(2.5, 4.3){\vector(-1, -2){0.7}}
\put(0.5, 2.5){a copy of $\widetilde{X_{G}}$}
\end{picture}
\caption{A schematic illustration of the space $\widetilde Z$ and a typical cycle inside it.  We illustrate the process of reflecting a piece of the cycle across a component of $\widetilde{W}$.} \label{fig:aa}
\end{figure}

\subsubsection*{Upper bound}
Now let $Y_G=X_G \cup_\phi (X_H \times [0,1/2])$ (or the other half of $Z$, which is isometric, so we don't distinguish them in notation).  Observe that $\widetilde{Z}$ is covered by copies of $\widetilde{Y_G}$, and adjacent copies of $\widetilde{Y_G}$ always meet in a component of $\widetilde{W}$.

To handle the upper bound, we recall the following key facts:
\begin{enumerate}[(i)]
\item For any $(n - 1)$-cycle in one component of $\widetilde{W}$ with boundary mass $x$, the filling volume cannot exceed $\FV^n_{H;R}(x)$.
\item For any $n$-cycle in $\widetilde{Y_{G}}$ with boundary mass $x$, the filling volume cannot exceed $x + \FV^{n + 1}_{G;R}(x)$.
\end{enumerate}
Now, let $\gamma$ be a $n$-cycle in $\widetilde{Z}$ of mass $x$.  We will construct a filling of $\gamma$ of bounded size by inducting on the number of copies of $\widetilde{Y_G}$ that it intersects. Suppose first that it lives in two adjacent copies of $\widetilde{Y_{G}}$ connected by a component $W_0$ of $\widetilde{W}$, which slices $\gamma$ into two pieces $\gamma_1$ and $\gamma_2$.  These pieces overlap in an $(n-1)$-cycle
\[\partial\gamma_1=-\partial\gamma_2=\gamma \cap W_0 \in Z_{n-1}(W_0;R).\]

By (i), we can build a filling $\sigma \in C_n(W_0;R)$ of $\gamma \cap W_0$ with volume $\preceq \FV^{n}_{H;R}\bigl(\Mass^{n} (\gamma)\bigr)$.  Then, $\gamma_1-\sigma$ and $\gamma_2+\sigma$ are $n$-cycles in respective copies of $\widetilde{Y_G}$ whose sum is $\gamma$. By (ii),
\begin{equation} \label{2 pieces}
    \FVol_{G *_H G;R}(\gamma) \preceq 2\bigl(x + \FV^n_{H;R}(x)+\FV^{n + 1}_{G;R}[x + \FV^{n}_{H;R}(x)]\bigr)
    \preceq  2\overline{\FV^{n+1}_{G;R}}\bigl(\overline{\FV^n_{H;R}}(x)\bigr).
\end{equation}

Now in the general case, $\gamma$ is contained in the union of a finite subtree of copies of $\widetilde{Y_{G}}$ glued along components of $\widetilde{W}$. We can build an overall filling by inductively clipping off the pieces living in each copy. At each step, we choose a copy $Y_0$ of $\widetilde{Y_G}$ which is a leaf of the finite subtree; it is connected to the rest of the cycle through a component $W_0$ of $\widetilde W$.  We modify $\gamma$ by ``reflecting'' $\gamma \cap Y_0$ through $W_0$, that is, by subtracting off $\gamma \cap Y_0$ and adding the corresponding chain in the neighboring copy of $\widetilde{Y_G}$.  The resulting cycle $\gamma'$ is contained in the union of one less copy of $\widetilde{Y_G}$, and $\Mass^n(\gamma') \leq \Mass^n(\gamma)$.  Moreover, \eqref{2 pieces} gives a bound on the filling volume of $\gamma-\gamma'$.

The number of reduction steps is bounded above by $x$, so
\[\FVol(\gamma) \preceq x\overline{\FV^{n+1}_{G;R}}\bigl(\overline{\FV^n_{H;R}}(x)\bigr). \qedhere\] 
\end{proof} 

\begin{thm}\label{RestateA}
For $H_{1} = H$ and $H_{n} = G_{n - 1} *_{H_{n - 1}} G_{n - 1}$ as given above, we have 
\begin{align*}
        \FV^{n+1}_{H_{n};R}(x) &\preceq x^{(n + 1)[\ln(n + 1) + 2]} & R
        &= \mathbb{Q}\text{ or }\mathbb{Z}/p\mathbb{Z}, \gcd(p,q)=1\\
        \FV^{n+1}_{H_{n};R}(x) &\succeq f_{d,n}(x)
        & R &= \mathbb{Z}\text{ or }\mathbb{Z}/q\mathbb{Z},
    \end{align*}
    where $f_{d,n}(x)=\exp(\!\sqrt[n]{x})$ if $d=\infty$ and $x^{d/n}$ otherwise.
\end{thm}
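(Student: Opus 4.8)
\emph{Proof strategy.} The plan is to prove Theorem~\ref{RestateA} by induction on $n$, taking the case $n=1$ from Section~3 as the base and using Theorem~\ref{RestateB} (i.e.\ Theorem~\ref{B}) together with the $\CAT(0)$ isoperimetric inequality (Proposition~\ref{CAT}) to pass from $n-1$ to $n$. First observe that $f_{d,1}=f_d$, so the base case is already available: for $R=\mathbb{Z}$ or $\mathbb{Z}/q\mathbb{Z}$ the lower bound $\FV^{2}_{H_{1};R}(x)\succeq f_{d,1}(x)$ is Theorem~\ref{LowerBound} (since $\Dist_{F_m}\sim f_d$), and for $R=\mathbb{Q}$ or $\mathbb{Z}/p\mathbb{Z}$ with $\gcd(p,q)=1$ the upper bound $\FV^{2}_{H_{1};R}(x)\preceq x^{5}$ is Proposition~\ref{UpperBound}; as $5\le 2[\ln 2+2]$, this is $\preceq x^{(n+1)[\ln(n+1)+2]}$ at $n=1$.

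For the inductive step, fix $n\ge 2$ and recall that $H_{n}\cong G_{n-1}*_{H_{n-1}}G_{n-1}$, where $G_{n-1}$ acts freely and geometrically on an $(n+1)$-dimensional $\CAT(0)$ cube complex $\widetilde{X_{G_{n-1}}}$, is of type $\F^{\infty}$, and $H_{n-1}$ is of type $\F^{n}$. Proposition~\ref{CAT} applied to $\widetilde{X_{G_{n-1}}}$ gives, for each coefficient group $R$ under consideration, $\FV^{k}_{G_{n-1};R}(x)\preceq x^{k/(k-1)}$ for all $2\le k\le n+1$. In particular $\FV^{n}_{G_{n-1};R}(x)\preceq x^{n/(n-1)}\preceq\exp(x)$, so the hypothesis of the lower-bound half of Theorem~\ref{RestateB} holds, and $\FV^{n+1}_{G_{n-1};R}(x)\preceq x^{(n+1)/n}$. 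It then remains to feed these estimates, together with the inductive hypotheses for $H_{n-1}$, into the two inequalities of Theorem~\ref{RestateB}.

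For the upper bound, put $e_m=(m+1)[\ln(m+1)+2]$ and assume inductively $\FV^{n}_{H_{n-1};R}(x)\preceq x^{e_{n-1}}$ for $R=\mathbb{Q}$ or $\mathbb{Z}/p\mathbb{Z}$, $\gcd(p,q)=1$. Theorem~\ref{RestateB} then gives
\[
\FV^{n+1}_{H_{n};R}(x)\preceq x\,\overline{\FV^{n+1}_{G_{n-1};R}}\bigl(\overline{\FV^{n}_{H_{n-1};R}}(x)\bigr)\preceq x\cdot\bigl(x^{e_{n-1}}\bigr)^{(n+1)/n}=x^{\,1+(n+1)[\ln n+2]},
\]
and the induction closes once we check $1+(n+1)[\ln n+2]\le e_n$, i.e.\ $(n+1)\ln\!\bigl(1+\tfrac1n\bigr)\ge 1$, which follows from $\ln(1+x)\ge x/(1+x)$ with $x=1/n$. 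For $R=\mathbb{Q}$ one runs the same reflection argument in its rational form (as the text notes, ``substantially similar''), additionally tracking as in Proposition~\ref{UpperBound} that every filling produced along the way --- the base fillings via Proposition~\ref{prop:1/q} and all subsequent $\mathbb{Z}$-linear combinations and reflections --- has coefficients in $(1/q)\cdot\mathbb{Z}$. For the lower bound, assume inductively $\FV^{n}_{H_{n-1};R}(x)\succeq f_{d,n-1}(x)$ for $R=\mathbb{Z}$ or $\mathbb{Z}/q\mathbb{Z}$. From $\overline{\FV^{n}_{G_{n-1};R}}(x)\preceq x^{n/(n-1)}$ we get $\bigl(\overline{\FV^{n}_{G_{n-1};R}}\bigr)^{-1}(x)\succeq x^{(n-1)/n}$, so the lower-bound half of Theorem~\ref{RestateB} yields
\[
\FV^{n+1}_{H_{n};R}(x)\succeq\FV^{n}_{H_{n-1};R}\bigl((\overline{\FV^{n}_{G_{n-1};R}})^{-1}(x)\bigr)\succeq f_{d,n-1}\bigl(x^{(n-1)/n}\bigr)\sim f_{d,n}(x),
\]
since $\bigl(x^{(n-1)/n}\bigr)^{d/(n-1)}=x^{d/n}$ when $d<\infty$ and $\exp\bigl((x^{(n-1)/n})^{1/(n-1)}\bigr)=\exp(x^{1/n})$ when $d=\infty$, multiplicative constants in the argument being absorbed by $\sim$. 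This completes the induction, and Theorem~\ref{thm:main} is then the special case $H=H_n$.

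I do not expect any single step to be a serious obstacle: all of the geometry has been packaged into Theorem~\ref{RestateB}, Proposition~\ref{CAT}, and the $n=1$ results, so the inductive step is essentially substitution plus exponent bookkeeping. The one point that genuinely needs care is that bookkeeping --- in particular the observation that the upper-bound exponent $(n+1)[\ln(n+1)+2]$ is stable under the recursion precisely because of the elementary inequality $(n+1)\ln(1+\tfrac1n)\ge 1$; this is why a logarithm appears in the exponent at all. Secondary technical points are the handling of the functional inverse $\bigl(\overline{\FV^{n}_{G_{n-1};R}}\bigr)^{-1}$ (filling functions being monotone only up to $\sim$) and, for $R=\mathbb{Q}$, making precise the rational analogue of the upper-bound half of Theorem~\ref{RestateB}; both are routine given the polynomial $\CAT(0)$ bounds on $G_{n-1}$.
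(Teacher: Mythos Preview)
Your proposal is correct and follows essentially the same approach as the paper: induction on $n$ with the $n=1$ case from Section~3, feeding Proposition~\ref{CAT} and Theorem~\ref{RestateB} into the inductive step, and handling $R=\mathbb{Q}$ separately by tracking $(1/q)\cdot\mathbb{Z}$ coefficients. The only cosmetic difference is in the exponent bookkeeping: the paper solves the recurrence $u_n(x)=x[u_{n-1}(x)]^{(n+1)/n}$ explicitly to get $u_n(x)\sim x^{(n+1)[5/2+\sum_{k=3}^{n+1}1/k]}$ and then bounds the harmonic sum, whereas you verify directly by induction that the target exponent $e_n=(n+1)[\ln(n+1)+2]$ is stable under the recursion via $(n+1)\ln(1+\tfrac1n)\ge 1$---arguably a tidier route to the same bound.
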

\begin{proof}
First, we use induction on $n$ to obtain the lower bound for $\FV^{n + 1}_{H_{n}; R}$ when $R = \mathbb{Z}$ or $\mathbb{Z}/q\mathbb{Z}$.  The case $n=1$ is given by Theorem \ref{LowerBound}.  Moreover, $G_{n-1}$ is a CAT(0) group, and therefore $\FV^{n}_{G_{n - 1}; R} \preceq x^{\frac{n}{n - 1}}$.  Now suppose we have the lower bound for $n-1$; then Theorem \ref{RestateB} tells us that
\[f_{d,n-1}(x^{\frac{n-1}{n}}) \preceq \FV^{n + 1}_{H_{n};R}(x).\]
The function on the left is $f_{d,n}(x)$. 

Now we turn to the upper bound when $R = \mathbb{Q}$ or $\mathbb{Z}/p\mathbb{Z}$. By Proposition \ref{UpperBound}, we know that $\FV^{2}_{H_{1}; R}(x) \preceq x^{5}$. If $R = \mathbb{Z}/p\mathbb{Z}$, then we can inductively apply the right inequality in Theorem \ref{RestateB}, 
\[\FV^{n + 1}_{H_{n};\mathbb{Z}/p\mathbb{Z}}(x) \preceq x\overline{\FV^{n+1}_{G_{n - 1};\mathbb{Z}/p\mathbb{Z}}}\bigl(\overline{\FV^n_{H_{n - 1};\mathbb{Z}/p\mathbb{Z}}}(x)\bigr).
\]
Then the asymptotic upper bounds for $\FV^{n + 1}_{H_n; \mathbb{Z}/p\mathbb{Z}}$ form a sequence of functions $\{u_n(x)\}_{n = 1,2,\ldots}$ which satisfies $u_{1}(x) = x^{5}$ and the recurrence formula
\[u_{n}(x) = x[u_{n - 1}(x)]^{\frac{n + 1}{n }}.\]
A computation then yields
\[\FV^{n + 1}_{H_{n}; \mathbb{Z}/p\mathbb{Z}} \preceq u_{n}(x) \sim x^{(n + 1)[\frac{5}{2} + \sum_{k = 3}^{n + 1} \frac{1}{k}]} \preceq x^{(n + 1)[\ln(n + 1)+2]}.\]

If $R=\mathbb{Q}$, we are unable to straightforwardly use the result of Theorem \ref{RestateB}: the proof does not work since the rational filling of a $(n - 1)$-cycle in $\widetilde{W}$ will not necessarily give us an integral $n$-cycle in $\widetilde{Y_{G_{n-1}}}$.  Fortunately, according to Proposition \ref{UpperBound}, every integral cycle in $L_{G}$ has a rational filling with mass $\preceq x^{5}$ and integer multiples of $1/q$ as the coefficients.  Equivalently, every cycle with coefficients in $q\mathbb{Z}$ has an integral filling with mass $\preceq x^5$.  Generalizing this, we obtain: 

\begin{lem}
When $n \geq 2$,
\[\FV^{n+1}_{H_{n}; \mathbb{Q}}(x) \preceq \frac{x}{q}\bigl[x + u_{n-1}(x) + \bigl(x + q \cdot u_{n-1}(x)\bigr)^{\frac{n + 1}{n}}\bigr] \preceq u_n(x),\]
and moreover we can always find a filling satisfying this bound whose coefficients are in $(1/q)\cdot\mathbb{Z}$.
\end{lem}
\begin{proof}
  We prove this by induction on $n$.  Using the lemma in dimension $n-1$, we can find efficient integral fillings in copies of $\widetilde{X_{H_{n-1}}}$ for cycles with coefficients in $q\mathbb{Z}$.  Given an integral $n$-cycle $\gamma$ in $H_n$, we apply the proof of the upper bound in Theorem \ref{RestateB} to $q \cdot \gamma$: iteratively clip off the outermost portions of $q \cdot \gamma$.  At each step, we end up with a new cycle $\gamma'$ with coefficients in $\mathbb{Z}$, such that $\Mass^n(\gamma') \leq \Mass^n(\gamma)$, and $q(\gamma-\gamma')=\partial \eta$ for some integral chain $\eta$ which satisfies
  \[\Mass^{n+1}(\eta) \preceq x + u_{n-1}(x) + \FV^{n + 1}_{G_{n - 1}; \mathbb{Q}}\bigl(x + q \cdot u_{n-1}(x)\bigr).\]
  After at most $x$ steps we end up with a cycle contained in two copies of $\widetilde{X_{G_{n-1}}}$, which we can fill in a similar way.  The sum of the various $\eta$ produced at each step is an integral $(n+1)$-chain which fills $q \cdot \gamma$ and satisfies the desired bound on mass.
\end{proof}

Applying the recurrence in the lemma, we once again get that $\FV^{n + 1}_{H_{n}; \mathbb{Q}}(x) \preceq x^{(n + 1)[\ln(n + 1) + 2]}$.
\end{proof}

\bibliographystyle{amsplain}
\bibliography{filling}
\end{document}